\documentclass[a4paper,12pt]{article}
\usepackage{a4}
\usepackage{amsmath}
\usepackage{amsfonts}
\usepackage{amsthm}
\usepackage{amssymb}
\usepackage[usenames,dvipsnames,svgnames,table]{xcolor}
\usepackage{graphicx}
\usepackage{tikz}
\usepackage{tkz-graph}
\usepackage{tkz-berge}
\usetikzlibrary{arrows,shapes}
\usepackage{url}
\usepackage{pgf}
\usepackage{tikz}
\usepackage{algorithm}
\usepackage{algorithmic}

\usepackage{caption}
\usepackage{subcaption} 
\usepackage{float}  

\usepackage{comment}

\usepackage{amsmath,stackrel}



\makeatletter
\newtheorem{theorem}{Theorem}[section]

\newtheorem{lemma}[theorem]{Lemma}
\newtheorem{prop}[theorem]{Proposition}

\newtheorem{cor}[theorem]{Corollary}

\newtheorem{example}[theorem]{Example}
\newtheorem{prm}[theorem]{Problem}
\newtheorem{rmk}[theorem]{Remark}
\newtheorem{conj}[theorem]{Conjecture}

\def\ni{\noindent}

\def\thm{\textbftheorem}
\def \bp {\begin{prp} \ }
\def \ep {\end{prp}}
\def \bpm {\begin{prm} \ }
\def \epm {\end{prm}}
\def \bc {\begin{crl} \ }
\def \ec {\end{crl}}
\def \thm {\begin{Theorem} \ }
\def \ethm {\end{Theorem}}
\def \bl {\begin{lem} \ }
\def \el {\end{lem}}
\def \bd {\begin{defi} \ \rm }
\def \ed {\end{defi}}
\def \brm {\begin{rmk} \ }
\def \erm {\end{rmk}}
\def \bxm {\begin{xmp} \ \rm }
\def \exm {\end{xmp}}
\def \bcj {\begin{conj}}
\def \ecj {\end{conj}}
\def \nmr {\begin{enumerate}}
\def \enmr {\end{enumerate}}
\def \tmz {\begin{itemize}}
\def \etmz {\end{itemize}}

\begin{document}
\begin{center}
\huge{\textbf{Injective Edge Chromatic Index of a Graph}}
\end{center}

\begin{center}
Domingos M. Cardoso\footnote{dcardoso@ua.pt}
J. Orestes Cerdeira\footnote{jo.cerdeira@fct.unl.pt}
J. Pedro Cruz\footnote{pedrocruz@ua.pt} and
Charles Dominic\footnote{dominic@ua.pt}\\
$^{1,3,4}$Center for Research and Development in Mathematics and Applications\\
$^{2}$Department of Mathematics and Center of Mathematics and Applications (CMA), Faculty of Sciences and Technology,
New University of Lisbon, Quinta da Torre, 2829-516 Caparica, Portugal\\
$^{1,3}$Department of Mathematics, University of Aveiro, 3810-193 Aveiro, Portugal
\end{center}

\begin{abstract}
Three edges $e_{1}, e_{2}$ and $e_{3}$ in a graph $G$ are consecutive if they form a path (in
this order) or a cycle of length three. An injective edge coloring of a graph $G = (V,E)$
is a coloring $c$ of the edges of $G$ such that if $e_{1}, e_{2}$ and $e_{3}$ are consecutive edges in
$G$, then $c(e_{1})\neq c(e_3)$. The injective edge coloring number $\chi_{i}^{'}(G)$ is the minimum
number of colors permitted in such a coloring. In this paper, exact values of $\chi_{i}^{'}(G)$ for
several classes of graphs are obtained, upper and lower bounds for $\chi_{i}^{'}(G)$ are introduced
and it is proven that checking whether $\chi_{i}^{'}(G)= k$ is NP-complete.
\end{abstract}
\textbf{AMS Subject Classification 05C15}\\
\textbf{Keywords: injective coloring, injective edge coloring.} \ni
\section{Introduction}
Throughout this paper we deal with simple graphs $G$ of \textit{order} $n\geq 2$ (the number of vertices) and
\textit{size} $m\geq 1$ (the number of edges). The vertex set and edge set will be denoted by $V(G)$ and $E(G)$, respectively.
A \textit{proper vertex (edge) coloring} of a graph $G$ is an assignment of colors to the vertices (edges) of
$G$, that is, $c:V(G) (E(G)) \rightarrow \mathcal{C}$,
where $\mathcal{C}$ is a set of colors, such that no two adjacent vertices (edges) have the same color, that is $c(x) \ne c(y)$ for every
edge $xy$ of $G$ ($c(e) \ne c(e')$ for every pair of edges $e, e'$ incident on the same vertex). The \textit{(edge) chromatic number}
($\chi'(G)$) $\chi(G)$ of $G$ is the minimum number of colors permitted in a such coloring.

Some variants of vertex and edge coloring have been considered.

An \textit{injective vertex coloring} of $G$ is a coloring of the vertices of $G$ so that any two
vertices with a common neighbor receive distinct colors.
The \textit{injective chromatic number} $\chi_{i}(G)$ of a graph $G$ is the smallest 
number of colors in 
an injective coloring of $G$.
Injective coloring of graphs was introduced by Hahn et. al in \cite{inj1} and was originated from Complexity Theory on Random Access Machines,
and can be applied in the theory of error correcting codes \cite{inj1}. In \cite{inj1} it was proved that, for $k \geq 3$, it is  NP-complete
to decide whether the injective chromatic number of a graph is at most $k$. Note that an injective coloring is not necessarily a proper coloring,
and vice versa (see {\cite{inj1,inj2}}).

The following variant of edge coloring was proposed in \cite{escd1}.
In a graph $G$, three edges $e_{1}$, $e_{2}$ and $e_{3}$ (in this fixed order) are called \textit{consecutive} if $e_{1}=xy$, $e_{2}=yz$ and
$e_{3}=zu$ for some vertices $x,y,z,u$ (where $x=u$ is allowed). In other words, three edges are consecutive if they form a path or a
cycle of lengths 3. A 3-consecutive edge coloring is a coloring of the edges such that for each three consecutive edges, $e_{1}$, $e_{2}$ and $e_{3}$,
the color of $e_2$ is one of the colors of $e_1$ or $e_3$. The 3-\textit{consecutive edge coloring number} of a graph $G$,
$\psi_{3c}^{'} (G)$, is the maximum number of colors of a 3-consecutive edge coloring of $G$. This concept was introduced and studied in
some detail in {\cite{escd1}}, where it is proven that the determination of the 3-consecutive edge coloring number for arbitrary graphs is
NP-hard.

Now we introduce the concept of \textit{injective edge coloring} (i-edge coloring for short) of a graph $G$  as a coloring,
$c:E(G) \rightarrow \mathcal{C}$,
such that if $e_{1}, e_{2}$ and  $e_{3}$ are consecutive edges in $G$, then $c(e_{1}) \ne c(e_3)$. The \textit{injective edge coloring number}
or \textit{injective edge chromatic index} of graph $G$, $\chi_{i}^{'}(G)$, is the minimum number of colors permitted in an i-egde coloring.
We say that graph $G$ is \textit{$k$ edge i-colorable} if  $\chi_{i}^{'}(G) \le k$.
Note that an i-egde coloring is not necessarily a proper edge coloring, and vice versa.

The motivation for the i-edge coloring is the following. We can model a Packet Radio Network (PRN) as an undirected graph $G=(V,E)$, where
the vertices represent the set of stations and two vertices are joined by an edge if and only if the corresponding
stations can hear each other transmissions, i.e, the set of edges $E$ represents the common channel property between the pairs of stations
(see \cite{Namdogopal_et_al2000, Ramanathan_Lloyd93}). Assigning channels or frequencies to the edges of $G$ we may define the secondary
interference as the one obtained when two
stations $x$ and $y$ that hear each other share the same frequency with one neighbor $x' \ne y$ of $x$ and one neighbor $y' \ne x$ of $y$.
An assignment of channels or frequencies to the edges between stations to avoid secondary interference corresponds to the i-edge coloring
of the graph (where each color is a frequency or channel).

In this paper we start the study of $i$-edge coloring of a graph $G$. We obtain exact values of $\chi_{i}^{'}(G)$ for several
classes of graphs, give upper and lower bounds for $\chi_{i}^{'}(G)$, and we prove that checking whether $\chi_{i}^{'}(G)= k$ is NP-complete.

For basic graph terminology we refer the book \cite{fh}. 

\section{Exact values of $\chi_{i}^{'}(G)$ for some classes of graphs}

We start this section with a few basic results 
which are direct consequences of the definition of injective edge coloring number.
As usually, the path, the cycle and the wheel with $n$ vertices will be denoted
by $P_n,$ $C_n$ and $W_n$, respectively. (The wheel with $n$ vertices is obtained by
connecting a single vertex to all vertices of $C_{n-1}$.) The complete graph of order $n$ is denoted $K_n$ and the complete
bipartite graph with bipartite classes of sizes $p$ and $q$
is denoted $K_{p, q}$. When $p=1$, the complete bipartite graph $K_{1,q}$ is called the star of order
$q+1$. (In particular, $P_2$ is the star $K_{1,1}$ and $P_3$ is the star $K_{1,2}$.)

Considering the above notation and denoting the Petersen graph by $\mathcal{P}$, the following
values for the injective edge coloring number can be easily derived.
\begin{prop}\label{particular_cases}
~
\\
\begin{enumerate}
\item  $\chi_{i}^{'}(P_{n})= 2$, for $n \ge 4$.\label{case_1}
\item $\chi_{i}^{'}(C_{n})= \left\{\begin{array}{ll}
                                          2, & \hbox{if } n \equiv 0 ~(\mbox{mod}~ 4),\\
                                          3, & \hbox{otherwise.}
                                   \end{array}\right.$ \label{case_2}
\item $\chi_{i}^{'}(K_{p, q})=\min \{p, q\}$.
\item 
$\chi_{i}^{'}(\mathcal{P})$ = $5$. (A feasible 5 i-egde coloring of the Petersen graph is shown in Figure~\ref{petersen_graph}. Note that
no pair of the edges labeled 1 to 5 can receive the same color.)
\end{enumerate}
\end{prop}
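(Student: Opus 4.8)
The common strategy is to recast an injective edge coloring of $G$ as an ordinary vertex coloring of an auxiliary \emph{conflict graph} $H(G)$ whose vertices are the edges of $G$, with two edges adjacent exactly when they form the outer pair $e_1,e_3$ of some consecutive triple. Then $\chi_i'(G)=\chi(H(G))$, so every part reduces to computing the chromatic number of a concretely described graph. The first thing I would record is a workable adjacency rule: when $G$ is triangle-free (which covers $P_n$, $C_n$ for $n\ge 4$, $K_{p,q}$ and $\mathcal{P}$), two edges $e,e'$ are adjacent in $H(G)$ if and only if they are vertex-disjoint and some edge of $G$ joins an endpoint of $e$ to an endpoint of $e'$. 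Indeed, if they share a vertex they could only be the two legs of a triangle, which is excluded; and if they are vertex-disjoint with a connecting edge the three edges automatically form a genuine path with three edges. The triangle $C_3$ is the lone exception and is handled directly.

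For the path $P_n$ with edges $f_1,\dots,f_{n-1}$ in order, the only consecutive triples are $f_{i-1},f_i,f_{i+1}$, so the conflict relation is precisely $f_i\sim f_{i+2}$ and $H(P_n)$ is a disjoint union of two paths. I would prove $\chi_i'(P_n)=2$ by exhibiting the periodic coloring $1,1,2,2,1,1,2,2,\dots$, which never repeats a color two steps apart, for the upper bound, and noting that the triple $f_1,f_2,f_3$, present once $n\ge 4$, forces at least two colors. For $C_n$ the same relation $f_i\sim f_{i+2}$ holds cyclically (and for $n=3$ a direct check shows all three edges pairwise conflict, matching this rule), so $H(C_n)$ is the circulant on $\{f_1,\dots,f_n\}$ with connections at distance $2$. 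The key step is the parity split: for $n$ odd this is a single $n$-cycle, while for $n$ even it is two disjoint cycles of length $n/2$. Applying $\chi(\text{even cycle})=2$ and $\chi(\text{odd cycle})=3$ then gives $2$ exactly when all these cycles are even, i.e.\ when $n\equiv 0\pmod 4$, and $3$ otherwise.

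For $K_{p,q}$ with $p\le q$, the triangle-free rule says two edges conflict if and only if they are vertex-disjoint. Hence an independent set of $H(K_{p,q})$ is a set of pairwise-intersecting edges, which in a bipartite graph must be a star; so every color class is a star. For the upper bound I would color each edge $a_ib_j$ by the index $i$ of its endpoint in the smaller part, using $\min\{p,q\}=p$ colors, and check that edges through a common $a_i$ never conflict. For the lower bound I would exhibit a clique of size $p$ in $H(K_{p,q})$, namely a maximum matching $\{a_1b_1,\dots,a_pb_p\}$, whose edges are pairwise vertex-disjoint and therefore pairwise adjacent in the conflict graph.

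For the Petersen graph $\mathcal{P}$ the bound $\chi_i'(\mathcal{P})\le 5$ is the explicit coloring of Figure~\ref{petersen_graph}. The substance is $\chi_i'(\mathcal{P})\ge 5$, which I would obtain by producing five pairwise-conflicting edges. Using the standard drawing (outer $5$-cycle $o_1\cdots o_5$, inner pentagram with $i_k\sim i_{k+2}$, spokes $o_ki_k$), I claim the five spokes form a clique in $H(\mathcal{P})$: any two spokes $o_ki_k$ and $o_\ell i_\ell$ are vertex-disjoint, and since every pair of indices on $\{1,\dots,5\}$ differs by $1$ or $2$ modulo $5$, they are joined either by the outer edge $o_ko_\ell$ (path $i_k\,o_k\,o_\ell\,i_\ell$) or by the inner edge $i_ki_\ell$ (path $o_k\,i_k\,i_\ell\,o_\ell$); either way they are adjacent in $H(\mathcal{P})$. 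Thus all five spokes need distinct colors. I expect the main obstacle to lie in the lower bounds rather than the constructions: pinning down the cycle dichotomy requires the exact parity bookkeeping of the circulant $H(C_n)$, and the Petersen bound depends on recognizing that the five spokes are genuinely pairwise conflicting, where the girth-$5$ property is what guarantees there are no triangles to spoil the vertex-disjointness characterization used throughout.
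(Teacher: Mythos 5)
Your proof is correct, but it takes a genuinely different route from the paper, which in fact offers no proof of Proposition~\ref{particular_cases} at all: the values are asserted to be ``easily derived'', and for the Petersen graph the paper only exhibits the coloring of Figure~\ref{petersen_graph} together with the unproved remark that its five labeled edges must get distinct colors. Your conflict graph $H(G)$ is precisely the graph $\bar{G}$ that the paper introduces only later, in Section 4, where Lemma~\ref{lemmaGbarG} records $\chi_{i}^{'}(G)=\chi(\bar{G})$; in effect you prove that lemma up front, sharpen it for triangle-free graphs (conflict $=$ vertex-disjoint plus a connecting edge, with $C_3$ checked directly), and then read off all four items uniformly: $H(P_n)$ is two paths; $H(C_n)$ is the distance-$2$ circulant, a single $n$-cycle for $n$ odd and two $(n/2)$-cycles for $n$ even, which yields the $n\equiv 0\ (\mbox{mod}\ 4)$ dichotomy; in $K_{p,q}$ conflict is exactly vertex-disjointness, so a $p$-edge matching is a clique while coloring by the smaller-side endpoint is proper; and the five spokes of $\mathcal{P}$ are a clique because any two indices in $\{1,\ldots,5\}$ differ by $1$ or $2$ modulo $5$, with girth $5$ guaranteeing the triangle-free rule applies. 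What your approach buys is a complete, unified argument (with explicit clique certificates for every lower bound) where the paper has only assertions; note that your Petersen certificate, the five spokes, differs from the paper's labeled edge set, which mixes outer, spoke and inner edges --- both are valid cliques in $\bar{\mathcal{P}}$. Two small remarks: for $n=4$ your ``two cycles of length $n/2$'' degenerate to two copies of $K_2$ (still $2$-chromatic, so nothing breaks and it is worth saying so), and the observation that independent sets of $H(K_{p,q})$ are stars, while true and a nice echo of Proposition~\ref{coloring_edge_partition}, is not actually needed once your upper and lower bounds are in place.
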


\begin{center}
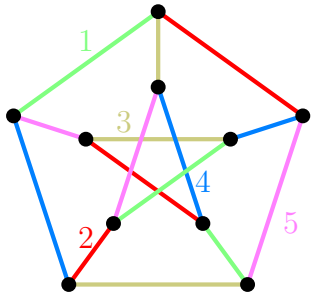
\begin{figure}[th]
\centering
\definecolor{c1}{rgb}{1.00,0.00,0.00}
\definecolor{c2}{rgb}{0.50,1.00,0.50}
\definecolor{c3}{rgb}{0.00,0.50,1.00}
\definecolor{c4}{rgb}{0.8,0.8,0.50} 
\definecolor{c5}{rgb}{1.00,0.50,1.00}
\begin{tikzpicture}[style= ultra thick]
\draw[c1] (18:2cm) -- (90:2cm);
\draw[c2] (90:2cm) -- (162:2cm) node[above,midway,c2] {1};
\draw[c3] (162:2cm)--(234:2cm);
\draw[c4] (234:2cm)--(306:2cm);
\draw[c5] (306:2cm)--(18:2cm) node[below,midway,c5] {$\quad5$};
\draw[c4] (18:1cm) -- (162:1cm); \node[] at (-.45, .54) {\textcolor[rgb]{0.8,0.8,0.50}{3}};
\draw[c1] (162:1cm) -- (306:1cm);
\draw[c3] (90:1cm) -- (306:1cm) ; \node[] at (.59,-.25) {\textcolor[rgb]{0.00,0.50,1.00}  {4}};
\draw[c5] (90:1cm) --(234:1cm);
\draw[c2] (234:1cm) --(18:1cm); %
\draw[c3] (18:1cm)--(18:2cm);
\draw[c4] (90:1cm)--(90:2cm);
\draw[c5] (162:1cm)--(162:2cm);
\draw[c1] (234:1cm)--(234:2cm); \node[] at (-.95,-1){\textcolor[rgb]{1.00,0.00,0.00} {2}};
\draw[c2] (306:1cm)--(306:2cm);
\foreach \x in {18,90,162,234,306}{
\draw (\x:2cm) [fill] circle  [radius=0.07];
\draw (\x:1cm) [fill] circle  [radius=0.07];}
\end{tikzpicture}
\caption{An injective edge coloring of Petersen graph with five colors. 
}\label{petersen_graph}
\end{figure}
\end{center}

\begin{prop}\label{coloring_edge_partition}
Let $G$ be a graph. Then $\chi_{i}^{'}(G)=k$ if and only if $k$ is the minimum positive integer for which
the edge set of $G$, $E(G)$, can be partitioned into non-empty subsets $E_1, \ldots, E_k$, such that the end-vertices
of the edges of each of these subsets $E_j$ induces a subgraph $G_j$ of $G$ where each component is a star.
\end{prop}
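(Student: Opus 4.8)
The plan is to first recast the problem as a partition question and then prove a single local lemma characterising the admissible colour classes.

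An i-edge colouring with colour set $\{1,\dots,k\}$ is the same datum as a partition of $E(G)$ into non-empty classes $E_1,\dots,E_k$. Since the colour of $e_1$ equals that of $e_3$ exactly when $e_1,e_3$ lie in the same class, the constraint $c(e_1)\neq c(e_3)$ over consecutive triples $e_1,e_2,e_3$ is violated iff some single class contains both extreme edges of a consecutive triple. Thus the condition decouples over classes, and it suffices to prove $(\ast)$: a set $E_j\subseteq E(G)$ contains no two edges that are the first and last of a consecutive triple of $G$ if and only if every component of the induced subgraph $G_j=G[V(E_j)]$ is a star. Granting $(\ast)$, an i-edge colouring with $k$ colours is precisely a partition of $E(G)$ into $k$ classes of the stated type, so the two minima coincide and the proposition follows.

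For the easy direction of $(\ast)$ I would argue contrapositively: if $E_j$ held edges $e_1=y'y$ and $e_3=zz'$ joined by a middle edge $e_2=yz\in E(G)$, then $y',y,z,z'\in V(E_j)$ all lie in one component $H$ of $G_j$, and $e_1,e_2,e_3$ sit inside $H$ as a path on four vertices (or, when $y'=z'$, a triangle). Neither a $P_4$ nor a $C_3$ embeds in a star, since a star is a tree of diameter at most two, so $H$ is not a star.

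The substantive direction is the converse, and the apparent obstacle is that $G_j$ is the \emph{induced} subgraph on $V(E_j)$, so a $P_4$ or triangle witnessing that a component $H$ fails to be a star may use edges that are not coloured $j$; one cannot simply read off a monochromatic conflict. The maneuver that removes this is to observe first that every vertex of $H$ is an endpoint of some edge of $F:=E_j\cap E(H)$ (its covering class-edge has both ends in $V(E_j)$ and so stays inside the component). Assuming $E_j$ admissible, I then claim every edge $yz$ of $H$ has an endpoint whose only $F$-edge is $yz$ itself: otherwise $y$ carries an $F$-edge $e_1\neq yz$ and $z$ carries an $F$-edge $e_3\neq yz$, and $e_1,yz,e_3$ is a consecutive triple with $e_1,e_3\in E_j$, a contradiction. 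This forces $yz\in F$ for every edge of $H$ (hence $F=E(H)$) and shows every edge of $H$ has an endpoint of degree one; a connected graph with that property is a star, as taking a vertex $c$ of maximum degree, if $\deg c\ge 2$ then all neighbours of $c$ are leaves and $H=K_{1,\deg c}$. This settles $(\ast)$ and the proposition. I expect the only delicate bookkeeping to be in the last step, namely verifying that the consecutive triple produced is genuine (its three edges pairwise distinct, allowing the $x=u$ case of the definition).
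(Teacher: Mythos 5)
Your proof is correct, and although its global skeleton is the same as the paper's (colour classes of an i-edge colouring correspond to the parts $E_1,\ldots,E_k$, and a class is admissible precisely when the components of the induced subgraph are stars), your handling of the key step is genuinely sharper than the paper's. The paper dismisses the forward direction with a parenthetical: the end-vertices of $E_j$ ``must induce a graph without three consecutive edges (otherwise the color can not be the same for all the edges in $E_j$).'' As you correctly identify, this is not immediate, because $G_j=G[V(E_j)]$ is an \emph{induced} subgraph: a $P_4$ or triangle witnessing that a component $H$ is not a star may consist largely of edges of other colours, so no monochromatic conflict can be read off directly. Your local lemma closes exactly this gap: every vertex of $H$ carries an edge of $F=E_j\cap E(H)$, so if some edge $yz$ of $H$ had $F$-edges $e_1\neq yz$ at $y$ and $e_3\neq yz$ at $z$, then $e_1,yz,e_3$ would be a genuine consecutive triple with both extremes in $E_j$ (your distinctness bookkeeping is fine: in a simple graph $e_1=e_3$ would force $e_1=yz$, and the case $y'=z'$ is expressly allowed by the definition of consecutive); hence every edge of $H$ lies in $E_j$ and has a pendant endpoint, and such a connected graph is a star. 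As a byproduct your argument yields $E_j=E(G_j)$, i.e.\ colour classes are induced, a fact the paper never makes explicit but which is exactly why the induced formulation of the proposition is the right one (a class whose own edges form disjoint stars can still be infeasible when a foreign-coloured edge joins two of its stars, and the induced condition is what rules this out). In short: same decomposition and same characterisation as the paper, but your per-edge lemma supplies the justification that the paper's one-line proof only asserts.
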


\begin{proof}
Let as assume that $\chi_{i}^{'}(G)=k$ and consider an injective edge coloring of the edges of $G$ using $k$
colors, $c_1, \ldots, c_k$. Then $E(G)$ can be partitioned into the subset of edges $E_1, \ldots, E_k$, where
every edge in $E_j$ has the color $c_j$, for $j=1, \ldots, k$. Then, for each $j \in \{1, \ldots, k\}$ the
end vertices of the edges of $E_j$ must induce a graph without three consecutive edges (otherwise the color
can not be the same for all the edges in $E_j$). Therefore, each component of the graph $G_j$ induced by the
end vertices of the edges in $E_j$ are stars. Furthermore, the positive integer $k$ is minimum (otherwise if
there exists such partition of $E(G)$ into $k'<k$ subsets of edges $E'_1, \ldots, E'_k$ then
$\chi_{i}^{'}(G)\le k' < k$).\\
Conversely, let us assume that $k$ is the minimum positive integer for which $E(G)$ can be partitioned as
described. Then, taking into account the first part of this proof, it is immediate that $\chi_{i}^{'}(G)=k$.
\end{proof}

Applying the Proposition~\ref{coloring_edge_partition}, we may conclude that the injective edge coloring
number of a wheel $W_{n},$, with $n \ge 4$ vertices, is:
$$
\chi_{i}^{'}(W_{n}) = \begin{cases}
                       6 \hskip.2cm if \hskip.2cm n \hskip.2cm is \hskip.2cm even \\
                       4 \hskip.2cm if \hskip.2cm n \hskip.2cm is \hskip.2cm odd \hskip.2cm and \hskip.2cm n-1 \equiv 0~(\mbox{mod}~ 4)\\
                       5 \hskip.2cm if \hskip.2cm n \hskip.2cm is \hskip.2cm odd \hskip.2cm and \hskip.2cm n-1 \not \equiv 0~(\mbox{mod}~ 4)
                      \end{cases}
$$
From Proposition~\ref{coloring_edge_partition}, it follows that whenever 
$\chi_{i}^{'}(G)=k$, the adjacency matrix $A_G$ of graph $G$ can be given by
\begin{equation}
A_G = \sum_{j=1}^{k}{A_{G_j}},\label{adjacency_matrix}
\end{equation}
where each $G_j$ is an induced subgraph of $G$, with at least one edge, and its components are stars or isolated vertices.
Therefore, $\chi_{i}^{'}(G_j)=1$, for $j=1, \ldots, k$, and $\chi_{i}^{'}(G)$ is the minimum number of induced subgraphs $G_j$
satisfying the conditions of Proposition~\ref{coloring_edge_partition}.

It is straightforward to see that for the edge chromatic number of $G$ and the vertex chromatic number of its line
graph $L(G)$, the equality $\chi^{'}(G)=\chi(L(G))$ holds. However, 
it is not always true that $\chi_{i}^{'}(G)=\chi_{i}(L(G))$. For instance, 
$\chi_{i}^{'}(K_{1,n})=1$ and $\chi_{i}(L(K_{1,n}))=n$.\\

Now, let us characterize the extremal graphs with largest and smallest injective chromatic index.

\begin{prop}\label{prop_2}
For any graph $G$ of order $n \ge 2$, $ \chi_{i}^{'}(G)=1$ if and only if $G$ is the disjoint union of $k\geq 1$ stars, i.e.,
$G=\cup_{j=1}^{k} K_{1, l_j}$, with $\sum_{j=1}^{k} l_j=n-k$ and $V(K_{1, l_j})\cap V(K_{1, l_{j'}})=\emptyset,$ for $j\not=j'$.
\end{prop}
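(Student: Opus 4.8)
The plan is to reduce the statement to a purely structural fact about which graphs admit a single-colour i-edge coloring. First I would observe that $\chi_{i}^{'}(G)=1$ holds exactly when the constant coloring (every edge receiving the one color $c_1$) is a valid i-edge coloring, and by the definition of consecutive edges this happens if and only if $G$ contains no three consecutive edges. This is precisely the case $k=1$ of Proposition~\ref{coloring_edge_partition}: the single class $E_1=E(G)$ is non-empty because $m\ge 1$, and the subgraph it induces is $G$ with its isolated vertices deleted, whose components must all be stars. So the whole proposition becomes the claim that $G$ has no three consecutive edges if and only if every connected component of $G$ is a star.

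Next I would translate ``three consecutive edges'' into forbidden subgraphs. Three consecutive edges $e_1=xy$, $e_2=yz$, $e_3=zu$ are three distinct edges of a walk of length three; since $G$ is simple, the only coincidence permitted among $x,y,z,u$ is $x=u$, which yields a triangle $C_3$, while in every other case $x,y,z,u$ are distinct and span a path $P_4$. Hence $G$ contains three consecutive edges if and only if $G$ contains $P_4$ or $C_3$ as a subgraph. This settles the ``if'' direction immediately: a disjoint union of stars is triangle-free and its longest path has length two, so it contains neither $P_4$ nor $C_3$, and one color suffices.

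For the ``only if'' direction I would argue component by component via the contrapositive, showing that a connected graph $H$ with at least one edge that is not a star must contain $P_4$ or $C_3$. I would first record that a connected graph is a star precisely when at most one of its vertices has degree at least two; consequently a non-star component $H$ either contains a cycle or is a tree that is not a star. If $H$ contains a cycle, take a shortest one, of length $g\ge 3$: when $g=3$ it is a $C_3$, and when $g\ge 4$ four consecutive vertices of the cycle span a $P_4$. If $H$ is a tree but not a star, I would use that a tree of diameter at most two is a star, so $H$ has diameter at least three and thus a shortest path on four vertices, i.e. a $P_4$. In every case $H$ contains three consecutive edges, which proves the contrapositive.

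Finally I would reassemble the pieces: if no component contains three consecutive edges, then every component is a star, and writing $G=\bigcup_{j=1}^{k}K_{1,l_j}$ (allowing $l_j=0$ to absorb isolated vertices, with at least one $l_j\ge 1$ since $m\ge 1$) gives $\sum_{j=1}^{k}(l_j+1)=n$, i.e. $\sum_{j=1}^{k}l_j=n-k$, exactly as stated. The main obstacle is the component analysis in the ``only if'' direction, specifically extracting a genuine $P_4$ without running into the degenerate vertex coincidences; this is why I prefer to split into the acyclic and cyclic cases and invoke the girth/diameter dichotomy rather than chasing vertices along an arbitrary walk of length three.
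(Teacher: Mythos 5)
Your proof is correct and follows essentially the same route as the paper's: both reduce $\chi_{i}^{'}(G)=1$ to the absence of three consecutive edges and then argue componentwise that this forces every component to be a star. The only difference is one of detail, not of method: you prove the structural step explicitly (via the translation of three consecutive edges into a $P_4$ or $C_3$ subgraph, and the cycle/diameter dichotomy), whereas the paper simply asserts ``since $G$ is connected, it must be $K_{1,n-1}$''; your explicit handling of isolated vertices via $l_j=0$ also patches a degenerate case the paper glosses over.
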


\begin{proof}
Suppose $G$ is connected. If $G$ is $K_{1,n-1}$, then, by Proposition~\ref{particular_cases} - item 3, $\chi_{i}^{'}(G)=1$.
The converse follows taking into account that $\chi_{i}^{'}(G)=1$ implies that there are no three
consecutive edges in $G$ (otherwise $\chi_{i}^{'}(G) \ge 2$) and, since $G$ is connected, it must be $K_{1,n-1}$.\\
If $G$ is not connected, apply the proof above to each connected component, and take into account that edges
of different components can receive the same color.
\end{proof}

A trivial upper bound on the injective edge chromatic number of a graph $G$ is its size, that is, $\chi_{i}^{'}(G) \leq  |E(G)|$.
The Proposition~\ref{complete_graphs} characterizes the graphs for which this upper bound is attained.

\begin{prop}\label{complete_graphs}
Consider a graph $G$ of order $n$ and size $m$, with no isolated vertices.
Then $\chi_{i}^{'}(G)=m$ if and only if $G$ is complete.
\end{prop}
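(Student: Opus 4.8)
The plan is to reduce the claim, via Proposition~\ref{coloring_edge_partition}, to a purely structural condition on \emph{pairs} of edges, and then to verify that condition exactly for complete graphs. First I would prove the reformulation: $\chi_{i}^{'}(G)=m$ if and only if for every pair of distinct edges $e,f$ of $G$ the subgraph induced by $V(e)\cup V(f)$ is \emph{not} a disjoint union of stars. For one direction, if some pair $e,f$ induces a star forest on $V(e)\cup V(f)$, then putting $e,f$ in a single class and every remaining edge in its own singleton class yields, by Proposition~\ref{coloring_edge_partition}, a feasible partition into $m-1$ classes (each singleton $\{g\}$ induces just the edge $g\cong K_{1,1}$), so $\chi_{i}^{'}(G)\le m-1<m$. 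Conversely, if $\chi_{i}^{'}(G)<m$, an optimal partition has fewer than $m$ non-empty classes, so by pigeonhole some class $E_j$ contains two edges $e,f$; since $G[V(e)\cup V(f)]$ is an induced subgraph of the star forest $G[V(E_j)]$, and an induced subgraph of a star forest is again a star forest, the pair $e,f$ witnesses the condition.

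With the reformulation in hand, the backward implication is a direct check. If $G=K_n$, then two adjacent edges $xy,yz$ have $V(e)\cup V(f)=\{x,y,z\}$ inducing the triangle $K_3$ (the edge $xz$ is present), while two disjoint edges $xy,zw$ induce $K_4$; neither is a star forest. When $n\le 3$ there are no two disjoint edges, so only the first situation arises. Hence no pair of edges induces a star forest, and the reformulation gives $\chi_{i}^{'}(K_n)=m$.

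For the forward implication I would argue by contraposition: assuming $G$ is not complete and has no isolated vertices, I exhibit a pair inducing a star forest. If $G$ contains an induced $P_3$, say $x$--$y$--$z$ with $xz\notin E(G)$, then $G[\{x,y,z\}]=P_3=K_{1,2}$ is a star, and we are done. Otherwise $G$ has no induced $P_3$ and is therefore a disjoint union of cliques; being non-complete with no isolated vertices, it has at least two components, each a clique on at least two vertices and hence containing an edge. Choosing an edge $e$ from one component and $f$ from another gives $G[V(e)\cup V(f)]=2K_2$, a star forest. In either case the reformulation yields $\chi_{i}^{'}(G)<m$, completing the contrapositive.

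The hard part will be the reformulation step, and in particular making precise that ``compatibility'' of two edges is genuinely a \emph{pairwise} property: merging a single compatible pair while keeping every other edge as a singleton must remain feasible. Routing this through Proposition~\ref{coloring_edge_partition}, together with the elementary fact that an induced subgraph of a star forest is a star forest, is the cleanest way to avoid the pitfalls of reasoning about arbitrarily large colour classes. A secondary point requiring care is the case split in the forward direction (induced $P_3$ versus cluster graph) and the degenerate small cases $n\le 3$, where disjoint edges are absent.
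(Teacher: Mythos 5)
Your proof is correct, but it reaches the result by a different route than the paper, which argues directly with colorings rather than through Proposition~\ref{coloring_edge_partition}. For the backward direction the paper observes that in $K_n$ any two edges are either adjacent (hence lie in a common triangle) or are joined by a third edge, so all $m$ colors are forced; for the forward direction it first notes $G$ must be connected (edges in distinct components could share a color), then for a connected non-complete $G$ picks two adjacent edges not lying in a common triangle (these exist: take a shortest path between two non-adjacent vertices) and colors them alike and all other edges distinctly. You instead factor everything through Proposition~\ref{coloring_edge_partition}, isolating a genuine intermediate lemma: $\chi_{i}^{'}(G)=m$ iff no pair of edges $e,f$ has $G[V(e)\cup V(f)]$ a star forest, with the pigeonhole-plus-induced-subgraph argument supplying the nontrivial direction. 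This buys you (i) a pairwise certificate that makes the completeness check mechanical ($K_3$ and $K_4$ are not star forests, and your merged-pair-plus-singletons partition is exactly the paper's ``two edges alike, the rest different'' coloring translated into partition language), and (ii) a case split --- induced $P_3$ versus cluster graph --- that handles connected and disconnected non-complete graphs uniformly, where the paper disposes of disconnectedness by a separate remark; your induced-$P_3$ case coincides with the paper's shortest-path argument. Two small points to record if you write this up: in the converse of your reformulation, the induced subgraph $G[V(e)\cup V(f)]$ of the star forest $G[V(E_j)]$ has no isolated vertices (every vertex of $V(e)\cup V(f)$ lies on $e$ or $f$), so its components are genuinely stars in the sense required by Proposition~\ref{coloring_edge_partition}; and the degenerate case $m=1$ is consistent with your lemma since the pair condition is vacuous and $\chi_{i}^{'}(G)=1=m$ there.
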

\begin{proof}
Assume that $G$ is 
the complete graph $K_n$, and consider two arbitrary edges $e_i$ and $e_j$
of $K_n$. Then either $e_i$ is adjacent to $e_j$ and thus they are both included in a triangle or there exists an edge $e_k$ such
that $e_i,$ $e_k$ and $e_j$ are three consecutive edges. In any of these cases $e_{i}$ and $e_{j}$ must have different colors.
Therefore, we have $\chi_{i}^{'}(G)=n(n-1)/2=m$.

Conversely, let us assume that $\chi_{i}^{'}(G)=m$. Clearly $G$ has to be connected, since otherwise
the same color could be used on edges from different components.
If $G$ has size one then it is complete.
Let us suppose that the size of $G$ is greater than one and $G$ is not complete. Since $G$ is connected,
there are two adjacent edges in $G$ not lying in the same triangle. Coloring these two edges by the color $c_1$ and all the
remaining edges differently, we produce an injective edge coloring with less than $m$ colors,
which is a contradiction. Therefore $G$ is complete.
\end{proof}

\section{${\boldmath{\omega^{'}}}$ edge injective colorable graphs}\label{sec_3}
The clique number of a graph $G$, denoted by $\omega(G)$, is the number of vertices in a maximum clique of $G$.
The number of edges in a maximum clique of $G$ is denoted by $\omega^{'}(G)$.
If $G$ has size $m\geq 1$, $\omega^{'}(G)=\frac{\omega(G)(\omega(G)-1)}{2}$.
\begin{prop}\label{PropEIP1}
For any connected graph $G$ of order $n \ge 2$, $\chi_{i}^{'}(G)\geq \omega^{'}(G)$.
\end{prop}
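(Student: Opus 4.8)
The plan is to show that the $\omega'(G)$ edges of a maximum clique $K$ in $G$ must all receive pairwise distinct colors in any injective edge coloring, which immediately forces $\chi_i'(G) \geq \omega'(G)$.

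First I would fix a maximum clique of $G$ on $\omega(G)$ vertices, whose edge set $E(K)$ has size $\omega'(G) = \frac{\omega(G)(\omega(G)-1)}{2}$. The key claim is that any two distinct edges $e_i, e_j \in E(K)$ must get different colors under any i-edge coloring $c$. To see this, I would split into two cases exactly as in the proof of Proposition~\ref{complete_graphs}. If $e_i$ and $e_j$ share a common vertex, then, since all clique vertices are mutually adjacent, their two non-shared endpoints are joined by an edge of $K$, so $e_i$ and $e_j$ lie together in a triangle; writing that triangle as three consecutive edges shows $e_i$ and $e_j$ are the first and third of a consecutive triple, whence $c(e_i) \neq c(e_j)$. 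If instead $e_i = xy$ and $e_j = zu$ are vertex-disjoint edges of $K$, then because $x,y,z,u$ all lie in the clique, the edge $yz$ exists, so $e_i = xy$, $yz$, $zu = e_j$ form three consecutive edges (a path of length three), and again the definition of injective edge coloring forces $c(e_i) \neq c(e_j)$.

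Having established that $c$ restricted to $E(K)$ is injective, the $\omega'(G)$ edges of the clique use $\omega'(G)$ distinct colors, so any i-edge coloring of $G$ needs at least that many colors; taking the minimum over all colorings gives $\chi_i'(G) \geq \omega'(G)$.

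The only point requiring care is the vertex-disjoint case, where one must verify that the connecting edge $yz$ genuinely exists and that the resulting triple $xy, yz, zu$ is a legitimate instance of three consecutive edges (with the endpoints distinct, as $x,y,z,u$ are four distinct clique vertices). This is the main obstacle only in the bookkeeping sense: once one observes that completeness of the clique supplies every edge among its vertices, both cases collapse to a direct application of the consecutive-edge condition. The connectedness hypothesis plays no essential role beyond guaranteeing the clique sits inside $G$; the argument is entirely local to $E(K)$.
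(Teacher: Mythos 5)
Your proof is correct and follows essentially the same route as the paper: the paper simply cites Proposition~\ref{complete_graphs} to get $\chi_{i}^{'}(K_r)=r(r-1)/2=\omega^{'}(G)$ for a maximum clique and then restricts any i-edge coloring of $G$ to the clique, and the two-case triangle/path argument you spell out inline is exactly the one contained in the proof of that proposition. Your version is just a self-contained unpacking (and correctly notes, as the paper leaves implicit, that consecutive triples inside the clique remain consecutive in $G$), so there is nothing to fix.
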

\begin{proof}
Let $K_{r}$ be a maximum clique in $G$. From Proposition~\ref{complete_graphs},
$\chi_{i}^{'}(K_{r})= r(r-1)/2= \omega^{'}(G)$.
Therefore, we need at least $r(r-1)/2$ colors to color the edges of $G$, i.e.,
$\chi_{i}^{'}(G)\geq r(r-1)/2=\omega^{'}(G)$.
\end{proof}

We say that $G$ is an \textit{$\omega^{'}$ edge injective colorable} ($\omega^{'}$EIC-)graph if
$\chi_{i}^{'}(G)=\omega^{'}(G)$.

\begin{example}
The following graphs are examples of $\omega^{'}$EIC-graphs.
\begin{enumerate}
\item The complete graph, $K_n$.
\item The star, $K_{1,q}$.
\item The friendship graph, i.e., the graph with $n=2p+1$ vertices formed by $p\geq 1$ triangles all attached to a common vertex
(see Figure~\ref{friendship_graph}).
\end{enumerate}
\end{example}
\begin{center}
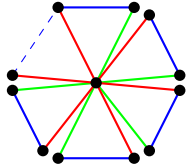
\begin{figure}
\centering
\begin{tikzpicture}
[every node/.append style={circle,  fill, inner sep=1.5pt, minimum size=1.5pt}]
\draw [thick,color=red,step=.5cm,](4,10)--(3.5,11);
\draw [thick,color=green,step=.5cm,](4,10)--(4.5,11){};
\draw [thick,color=red,step=.5cm,](4,10)--(4.7,10.9){};
\draw [thick,color=green,step=.5cm,](4,10)--(5.1,10.1) {};
\draw [thick,color=red,step=.5cm,](4,10)--(5.1,9.9) {};
\draw [thick,color=green,step=.5cm,](4,10)--(4.7,9.1) {};
\draw [thick,color=red,step=.5cm,](4,10)--(4.5,9)  {};
\draw [thick,color=green,step=.5cm,](4,10)--(3.5,9)  {};
\draw [thick,color=red,step=.5cm,](4,10)--(3.3,9.1)  {};
\draw [thick,color=green,step=.5cm,](4,10)--(2.9,9.9) {};
\draw [thick,color=red,step=.5cm,](4,10)--(2.9,10.1) {};
\draw [thick,color=blue,step=.5cm,] (4.5,11)--(3.5,11);
\draw [thick,color=blue,step=.5cm,](4.7,10.9)--(5.1,10.1) {};
\draw [thick,color=blue,step=.5cm,](5.1,9.9)--(4.7,9.1);
\draw [thick,color=blue,step=.5cm,](4.5,9)--(3.5,9)  {};
\draw [thick,color=blue,step=.5cm,](3.3,9.1)--(2.9,9.9) {};
\draw [dashed,color=blue,step=.5cm,](2.9,10.1)--(3.5,11)  {};
\node (1) at (4,10)  {};
 \node (2) at (3.5,11)  {};
 \node (3) at (4.5,11){};
 \node (4) at (4.7,10.9){};
 \node (5) at (5.1,10.1) {};
 \node (6) at (5.1,9.9) {};
 \node (7) at (4.7,9.1)  {};
 \node (8) at (4.5,9)  {};
 \node (9) at (3.5,9)  {};
 \node (10) at (3.3,9.1)  {};
 \node (11) at (2.9,9.9) {};
 \node (12) at (2.9,10.1) {};
\end{tikzpicture}
\caption{The friendship graph.}\label{friendship_graph}
\end{figure}\end{center}


\begin{prop}\label{coalescing}
For any positive integer $p \geq 3$, consider the complete graph $K_{p}$, with  $V(K_p)=\{v_1, \ldots, v_p\}$,
and a family of stars $K_{1,q_1}, \ldots, K_{1,q_p}$, with $q_j\geq 1$. Let $G$ be the graph obtained coalescing a maximum degree
vertex of the star $K_{1,q_j}$ with the vertex $v_j$ of $K_{p}$, for $j=1, \ldots, p$. Then $G$ is an $\omega^{'}$EIC-graph.
\end{prop}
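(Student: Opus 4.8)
The plan is to establish the two inequalities $\chi_{i}^{'}(G) \ge \omega^{'}(G)$ and $\chi_{i}^{'}(G) \le \omega^{'}(G)$. First I would determine $\omega^{'}(G)$: since every vertex outside $K_p$ is a leaf (the coalesced stars contribute only pendant edges at the vertices $v_1,\ldots,v_p$), no clique of $G$ is larger than $\{v_1,\ldots,v_p\}$, so $\omega(G)=p$ and $\omega^{'}(G)=\binom{p}{2}$. The lower bound $\chi_{i}^{'}(G)\ge\binom{p}{2}$ is then immediate from Proposition~\ref{PropEIP1}. The whole work is therefore to exhibit an injective edge coloring with exactly $\binom{p}{2}$ colors, or equivalently, by Proposition~\ref{coloring_edge_partition}, a partition of $E(G)$ into $\binom{p}{2}$ classes each inducing a disjoint union of stars.

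For the construction I would first color the $\binom{p}{2}$ edges of $K_p$ with pairwise distinct colors; this is forced, since any two clique edges are the end-edges of some consecutive triple inside $K_p$ (cf.\ Proposition~\ref{complete_graphs}) and hence must differ. It then remains to color the pendant edges reusing these colors. The key observation to record is a short case analysis of which edges are forced to differ: a pendant edge $e=v_jw$ (with $w$ a leaf) can occur only as the first or last edge of a consecutive triple of the form $w-v_j-\cdot-\cdot$, so $e$ conflicts exactly with (i) the clique edges not incident to $v_j$, and (ii) the pendant edges at vertices other than $v_j$. In particular, all pendant edges at a fixed $v_j$ may share a single color, and that color may coincide with the color of a clique edge incident to $v_j$.

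Guided by this, I would assign to all pendant edges at $v_j$ the color of one clique edge incident to $v_j$, chosen so that the colors picked for different vertices are distinct. Concretely, fix the Hamiltonian cycle $v_1v_2\cdots v_pv_1$ of $K_p$ and give every pendant edge at $v_j$ the color of the cycle edge $v_jv_{j+1}$ (indices mod $p$); for $p\ge 3$ these $p$ cycle edges are distinct, so the $p$ pendant colors are distinct, and each is the color of an edge incident to $v_j$, hence avoids the forbidden clique colors of (i). Finally I would verify validity via Proposition~\ref{coloring_edge_partition}: the class of a non-cycle clique edge is a single edge (a $K_{1,1}$), while the class of a cycle edge $v_jv_{j+1}$ consists of that edge together with all pendant edges at $v_j$, whose end-vertices induce a star centered at $v_j$. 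All $\binom{p}{2}$ classes are non-empty and induce stars, so $\chi_{i}^{'}(G)\le\binom{p}{2}$, giving equality.

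The main obstacle is the middle step: one must get the conflict rules for pendant edges exactly right (the subtle point being that a pendant edge does not conflict with the clique edges through its own attachment vertex, nor with the other pendants at that vertex), and then realize that distinctness of the reused colors across the $p$ attachment vertices is a system-of-distinct-representatives condition, which for $K_p$ with $p\ge 3$ is cleanly met by any Hamiltonian cycle. Everything after that is routine verification.
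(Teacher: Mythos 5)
Your proof is correct and takes essentially the same route as the paper: the paper likewise fixes the Hamiltonian cycle $v_1v_2\cdots v_pv_1$ of $K_p$, colors the cycle edge $v_jv_{j+1}$ with $c_j$ and gives every pendant edge at $v_j$ that same color $c_j$, colors the remaining clique edges with fresh distinct colors, and obtains the matching lower bound from Proposition~\ref{PropEIP1}. Your explicit conflict analysis for pendant edges and the verification via Proposition~\ref{coloring_edge_partition} merely spell out details the paper leaves implicit in the claim that its coloring is injective.
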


\begin{proof}
Consider the Hamiltonian cycle of $K_p$, $C_p=v_1,v_2 \ldots v_p, v_1$. Color each edge $e_i=v_iv_{i+1},$
for $i=1, \ldots, p-1$ of  $C_p$ with color $c_i$, $1 \le i \le p-1$ and the edge $e_p=v_pv_1$ with
color $c_p$. For each $j \in \{1, \ldots, p\}$, color all the edges of the star $K_{1,q_j}$ with color
$c_j$. Now color all the remaining edges of $K_p$ differently. Since this coloring produces an injective
edge coloring, we have
\begin{equation}
\chi_{i}^{'}(G) \le p(p-1)/2=\omega^{'}(K_{p})=\omega^{'}(G). \label{Kp-coalescing_2}
\end{equation}
The result now follows from Proposition \ref{PropEIP1}
\end{proof}

Notice that the \textit{corona} $K_p \circ K_1$, that is, the graph obtained from $K_p$ by adding a pendant
edge to each of its vertices, is a particular case of the graphs $G$ considered in Proposition~\ref{coalescing},
which is obtained setting $K_{1,q_j}=K_{1,1}$ for $j=1, \ldots, p$.

\begin{prop}\label{unicyclic_eip_grapj}
If $G$ is a unicyclic graph with $K_{3}$, then $G$ is an $\omega^{'}$EIC-graph.
\end{prop}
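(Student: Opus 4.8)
The plan is to prove the two inequalities $\chi_i'(G)\ge \omega'(G)$ and $\chi_i'(G)\le \omega'(G)$ separately. Since $G$ is unicyclic and contains a triangle, its unique cycle is that $K_3$; in particular $G$ contains no $K_4$, so $\omega(G)=3$ and $\omega'(G)=3$. The lower bound $\chi_i'(G)\ge 3$ then follows directly from Proposition~\ref{PropEIP1}, and the whole problem reduces to producing one injective edge coloring that uses only three colors.

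First I would record the structure of $G$. Deleting the three edges of the triangle $v_1v_2v_3$ removes the only cycle and drops the edge count from $n$ to $n-3$, leaving a spanning forest with exactly three components, each containing one triangle vertex; call these trees $T_1,T_2,T_3$ and root $T_i$ at $v_i$. Thus $G$ is the triangle together with three rooted trees hanging from its vertices, and the \emph{only} edges joining two distinct trees are the triangle edges. For a tree edge $e$ let $\lambda(e)$ be the distance from $v_i$ to the endpoint of $e$ farther from $v_i$, so that the ``spokes'' incident with $v_i$ are exactly the edges with $\lambda=1$.

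The coloring I propose uses colors $\{1,2,3\}$. Set $c(v_2v_3)=1$, $c(v_1v_3)=2$, $c(v_1v_2)=3$, and color each tree periodically in $\lambda \bmod 3$ through a bijection $\mathbb{Z}/3\to\{1,2,3\}$, chosen so that the spoke color at $v_i$ avoids the color of the triangle edge opposite $v_i$ and so that the three spoke colors are pairwise distinct: concretely the residues $(1,2,0)$ are sent to $(2,1,3)$ on $T_1$, to $(3,2,1)$ on $T_2$, and to $(1,3,2)$ on $T_3$. This gives spoke colors $2,3,1$ at $v_1,v_2,v_3$, which are distinct and respectively avoid $1,2,3$.

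The verification rests on the elementary remark that two distinct edges of $G$ must receive different colors exactly when they are the two outer edges of a $P_4$ or two edges of a triangle, equivalently when some third edge of $G$ joins an endpoint of one to an endpoint of the other. Inside a fixed tree this forces the two endpoints in question to lie at consecutive distances from $v_i$, so two conflicting tree edges have $\lambda$-values differing by $1$ or $2$; since each tree is colored by a bijection on $\mathbb{Z}/3$, such edges automatically get different colors, while edges whose levels differ by $3$ or more share no joining edge and never conflict. The remaining conflicts are the local ones around the triangle, which involve only the three triangle edges and the spokes at $v_1,v_2,v_3$; these form a bounded configuration that I would check directly against the assignment above. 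I expect this triangle neighbourhood — reconciling the three spoke colors with one another and with the three triangle edges on a single palette of three colors — to be the only delicate point, everything interior to the trees being handled uniformly by the mod-$3$ rule. Together with the lower bound this yields $\chi_i'(G)=3=\omega'(G)$, so $G$ is an $\omega'$EIC-graph.
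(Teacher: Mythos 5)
Your proposal is correct and takes essentially the same route as the paper's own proof: the lower bound $\chi_i'(G)\ge 3=\omega'(G)$ from the triangle via Proposition~\ref{PropEIP1}, and the upper bound via an explicit 3-coloring in which each of the three trees hanging off the triangle is colored by its distance level modulo~3 with a per-tree rotation of the palette. Your phase choice (spoke colors $2,3,1$ at $v_1,v_2,v_3$, each avoiding the opposite triangle edge) differs only cosmetically from the paper's (the spokes at $v_i$ reuse the color of a triangle edge incident to $v_i$), and your explicit assignment does pass the finite verification around the triangle, including the level-2 edges against the triangle edges incident to their root.
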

\begin{proof} Let the vertices of the cycle $K_{3}$ be $v_{1},v_{2},v_{3}$, and the edges
$e_{1}=v_{1}v_{2}$, $e_{2}=v_{2}v_{3}$, $e_3=v_3v_1$. Color the edge $e_i$ with color $c_i$, for $i=1,2,3$.
Let $T_{1}$, $T_{2}$ and $T_{3}$ be the trees which are incident to $v_{1}$, $v_{2}$ and $v_{3}$, respectively, and color the edges of these trees as follows.
\begin{itemize}
\item{
} 
Color the edges in $T_{1}$ which are incident to $v_{1}$ with color $c_1$, and call $C^1_{1}$ the set of these edges.
Consider the edges in $T_{1}$ which are adjacent to $C^1_{1}$ edges, and color all these edges with color $c_2$.
Call $C^1_{2}$ the set of these edges.
Now consider the edges in $T_{1}$ which are adjacent to edges of $C^1_{2}\setminus C^1_{1}$, color these edges with color $c_3$,
and call the set of these edges $C^1_{3}$.
Again, consider the edges in $T_{1}$ which are adjacent to edges of $C^1_{3}\setminus C^1_2$, color these edges with color $c_1$, and call
the set of these edges $C^2_{1}$ edges. Continue this procedure
until all edges in $T_{1}$ have been colored.
\item{
}
Color the edges in $T_{2}$ which are incident to $v_{2}$ with color $c_2$, and call the set of these edges $C^1_{2}$.
Consider the edges in $T_{2}$ which are adjacent to $C^1_{2}$ edges, color all these edges with color $c_3$, and call $C^1_3$ the
set of  these edges.
Now consider the edges in $T_{2}$ which are adjacent to $C^1_3\setminus C^1_{2}$, color these edges with color $c_1$,
and call $C^1_1$ the set of these edges.
Again, consider the edges in $T_{2}$ which are adjacent to $C^1_1\setminus C^1_{3}$ edges, color these edges
    with color $c_2$, and call the set of these edges $C^2_2$.
Continue this procedure
until all edges in $T_{2}$ have been colored.
\item{
}
Color the edges in $T_{3}$ which are incident to $v_3$ with color $c_3$ and call the set of
these    edges $C^1_3$.
Consider the edges in $T_{3}$ which are adjacent to $C^1_{3}$ edges, color these edges with color $c_1$, and call the
set of these edges $C^1_{1}$.
Now consider the edges in $T_{3}$ which are adjacent to $C^1_{1}\setminus C^1_{3}$ edges, color these edges with color $c_2$, and call  $C^1 _{2}$
the set of these edges.
Again, consider the edges in $T_{3}$ which are adjacent to $C^1_{2}\setminus C^1_{1}$, color these edges
    with color $c_3$ and call the set of these edges $C^2_{3}$.
Continue this procedure
until all edges in $T_{3}$ have been colored.
\end{itemize}
This clearly produces a feasible 3 i-egde coloring of $G$, and since 3 colors are needed for coloring
the triangle $K_3$, we can conclude that $\chi_{i}^{'}(G)= 3= \omega^{'}(G)$, and the result follows.
\end{proof}

\section{Bounds on the injective chromatic index}\label{sec_4}


%
%
%
%

Now we consider the injective edge coloring number of bipartite graphs.

\begin{prop}\label{bound_on_bipertite_graphs}
If $G$ is a bipartite graph with bipartition $V(G)=V_{1}\cup V_{2}$, and $G$ has no isolated vertices,
then $\chi_{i}^{'}(G)\le \min \{\mid V_{1}\mid, \mid V_{2}\mid\}$.
\end{prop}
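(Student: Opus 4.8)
The plan is to exhibit an injective edge coloring that uses exactly $\min\{|V_1|,|V_2|\}$ colors, which immediately yields the upper bound. Assume without loss of generality that $|V_1| \le |V_2|$ and write $V_1 = \{u_1, \ldots, u_p\}$ with $p = |V_1| = \min\{|V_1|,|V_2|\}$. Because $G$ is bipartite, every edge of $G$ has exactly one endpoint in $V_1$; so I would color each edge $e$ with the color $c_i$ determined by its unique endpoint $u_i \in V_1$. This assigns one of $p$ colors to every edge, and since $G$ has no isolated vertices each $u_i$ is incident with at least one edge, so each color class $E_i$ is non-empty.

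To show that this is a legitimate coloring I would appeal to Proposition~\ref{coloring_edge_partition}: it suffices to check that the end-vertices of the edges in each class $E_i$ induce a subgraph all of whose components are stars. Fix $i$; the set of end-vertices of $E_i$ is $\{u_i\} \cup N(u_i)$, and $N(u_i) \subseteq V_2$. Since $V_2$ is an independent set, the only edges of $G$ among these vertices are those joining $u_i$ to $N(u_i)$; hence the induced subgraph is precisely the star $K_{1,\deg(u_i)}$ centered at $u_i$. Thus each class induces a single star, and the coloring meets the condition of Proposition~\ref{coloring_edge_partition}, giving $\chi_{i}^{'}(G) \le p = \min\{|V_1|,|V_2|\}$.

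The one point that genuinely uses bipartiteness — and the only place where care is needed — is the verification that no three consecutive edges $e_1,e_2,e_3$ receive $c(e_1)=c(e_3)$. Equality of these colors would mean that $e_1$ and $e_3$ share a vertex of $V_1$. If I argue directly (rather than through the partition characterization), I would note that in a bipartite graph there is no triangle, so three consecutive edges must form a genuine path on four distinct vertices $x,y,z,u$; then $\{x,y\}$ and $\{z,u\}$ are disjoint, so $e_1$ and $e_3$ cannot be incident to a common vertex, and in particular not to a common vertex of $V_1$. This is exactly the step that would fail for a general (non-bipartite) graph containing a triangle, and it is where the hypothesis is essential. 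Everything else is routine bookkeeping, so I expect no substantial obstacle beyond making this triangle-free observation explicit.
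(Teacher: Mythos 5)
Your proof is correct. It is worth noting, though, that it is not the paper's route: the paper disposes of this proposition in one line, citing Proposition~\ref{particular_cases}, item~3 (namely $\chi_{i}^{'}(K_{p,q})=\min\{p,q\}$), the implicit reasoning being that $G$ is a subgraph of the complete bipartite graph $K_{|V_1|,|V_2|}$ and that the injective edge chromatic index is monotone under taking subgraphs --- a fact the paper only formalizes later, as Proposition~\ref{subgraph_result}. You instead build the coloring explicitly on $G$ itself: color each edge by its unique endpoint $u_i$ in the smaller class $V_1$, then verify feasibility twice, once through the partition characterization of Proposition~\ref{coloring_edge_partition} (each class $E_i$ has end-vertex set $\{u_i\}\cup N(u_i)$ with $N(u_i)\subseteq V_2$ independent, so it induces the single star $K_{1,\deg(u_i)}$), and once directly. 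Your construction is essentially the one underlying the paper's item~3 for $K_{p,q}$, so the key idea is the same, but your write-up buys self-containedness: it does not presuppose the unproved ``easily derived'' value for $K_{p,q}$, nor the subgraph monotonicity that the paper has not yet stated at this point. Your direct verification is also handled with the right care: among the four vertices $x,y,z,u$ of three consecutive edges, the only coincidence the definition permits is $x=u$ (the triangle case), and bipartiteness excludes exactly that, after which $e_1$ and $e_3$ are vertex-disjoint and in particular cannot share a $V_1$-endpoint; your parenthetical use of ``no isolated vertices'' to guarantee each color class is non-empty is likewise the correct (if minor) bookkeeping.
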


\begin{proof}
The proof follows directly from Proposition \ref{particular_cases} - item 3.
\end{proof}
Note that the above bound is attained for every complete bipartite graph
$K_{p,q}$.




We now combine Proposition \ref{bound_on_bipertite_graphs} with results from \cite{escd1} on the $3$-consecutive edge
coloring of graphs to obtain bounds on the injective edge chromatic index for bipartite graphs.

Bujt\'as et. al  \cite{escd1} proved the following results.

\begin{prop}\label{Buj1}
\cite{escd1}
If $G$ is a bipartite graph with bipartition $V(G) = V_1 \cup V_2$, and $G$ has no isolated vertices,
then $\max\{|V_{1}|, |V_{2}|\} \le \psi_{3c}^{'} (G) \le \alpha(G)$, where $\alpha(G)$ is the
independence number of $G$.
\end{prop}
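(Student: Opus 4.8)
The statement splits into two inequalities: the lower bound $\max\{|V_1|,|V_2|\}\le\psi_{3c}^{'}(G)$ and the upper bound $\psi_{3c}^{'}(G)\le\alpha(G)$. Since $\psi_{3c}^{'}$ counts the \emph{maximum} number of colors admissible, the lower bound asks for one explicit good coloring, while the upper bound asserts that no coloring can beat $\alpha(G)$.

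For the lower bound I would exhibit a single coloring already using $\max\{|V_1|,|V_2|\}$ colors. Assume without loss of generality $|V_1|\ge|V_2|$ and color each edge by (the name of) its end-vertex in $V_1$; because $G$ is bipartite with no isolated vertex, every $u\in V_1$ carries an incident edge of color $u$, so exactly $|V_1|$ distinct colors actually occur. To verify this is a $3$-consecutive edge coloring, take any consecutive triple $e_1=xy,\ e_2=yz,\ e_3=zu$. Bipartiteness forces the vertices to alternate sides, so either $x,z\in V_1$, giving $c(e_2)=z=c(e_3)$, or $y,u\in V_1$, giving $c(e_2)=y=c(e_1)$; in both cases the middle edge repeats an outer color, as the definition requires. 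Hence $\psi_{3c}^{'}(G)\ge|V_1|=\max\{|V_1|,|V_2|\}$. (This is the same ``color by an endpoint'' idea underlying Proposition~\ref{particular_cases}, item~3, now deployed to produce many colors rather than few.)

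The upper bound is the substantive half, and the bipartite hypothesis is indispensable: for the triangle one checks $\psi_{3c}^{'}(K_3)=2>1=\alpha(K_3)$, so any valid argument \emph{must} use the absence of triangles. Triangle-freeness buys the key local fact that every consecutive triple is a genuine $P_4$ whose two outer edges lie in the same part; equivalently, no three consecutive edges of $G$ ever carry three distinct colors. My plan is to extract from an optimal coloring an injection $\phi$ sending each color $i$ to an incident vertex $\phi(i)$ so that $\phi(C)$ is an \emph{independent} set; this yields $\alpha(G)\ge|C|=\psi_{3c}^{'}(G)$ at once. Concretely I would pick a representative edge $e_i$ for each color $i$ (the $e_i$ are distinct since the colors are) and then choose one endpoint of each $e_i$, phrasing the endpoint choices as a $2$-satisfiability instance whose clauses forbid selecting two mutually adjacent (or coincident) endpoints belonging to different colors.

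The content lies entirely in showing this instance is satisfiable, and I expect this to be the main obstacle: one must rule out a cyclic chain of forced contradictory choices. This is exactly where I would spend the bipartite structure together with the local rule above. A contradiction in the implication graph corresponds to a closed alternating walk tracing forced endpoint switches through the edges $e_i$; I would argue that such a walk either closes up with odd length, which is impossible in a bipartite graph, or exhibits three consecutive edges receiving three distinct colors, which the $3$-consecutive condition forbids. Once feasibility is established, the chosen endpoints are pairwise non-adjacent and distinct, one per color, so they form the sought independent set and close the bound $\psi_{3c}^{'}(G)\le\alpha(G)$.
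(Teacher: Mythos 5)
The paper itself gives no proof of this proposition (it is quoted from \cite{escd1}), so your attempt stands or falls on its own. Your lower-bound half is correct and complete: coloring each edge by its endpoint in the larger class is well defined, uses exactly $\max\{|V_1|,|V_2|\}$ colors because there are no isolated vertices, and your alternation check covers all consecutive triples since a bipartite graph has no triangles. The upper-bound half, however, has a genuine gap, and it begins with a false computation. Under the paper's definition the three edges of a triangle are consecutive in every cyclic order (this is the case $x=u$), so in $K_3$ each edge, taken as the middle one, must repeat the color of one of the other two; a short case check then shows every 3-consecutive edge coloring of $K_3$ is monochromatic, i.e. $\psi_{3c}^{'}(K_3)=1=\alpha(K_3)$, not $2>1$ as you claim. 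Thus your guiding premise --- that bipartiteness is indispensable for $\psi_{3c}^{'}(G)\le\alpha(G)$ --- is wrong. Relatedly, your ``key local fact'' that no consecutive triple carries three distinct colors is not bought by triangle-freeness: it is the definition of the coloring, verbatim, in every graph. What remains, the satisfiability of your 2-SAT instance, is exactly what you do not prove: the ``odd closed walk or three distinct colors'' dichotomy is announced but never established, and it is not even clear how walks in the implication graph translate into walks in $G$.

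The gap closes with one local observation that makes the 2-SAT machinery (and bipartiteness) unnecessary. In any 3-consecutive edge coloring of any graph, every edge $e=uv$ has an endpoint all of whose incident edges carry the color $c(e)$: otherwise there exist $e'=tu\neq e$ and $e''=vw\neq e$ with $c(e')\neq c(e)$ and $c(e'')\neq c(e)$, and then $e',e,e''$ are consecutive (here $t=w$ is allowed, which is precisely why the triangle case of the definition matters) while $c(e)\notin\{c(e'),c(e'')\}$, a contradiction. Now for each color $i$ pick an edge $e_i$ of that color and such a monochromatic endpoint $x_i$. The $x_i$ are pairwise distinct, since a vertex with at least one incident edge cannot be monochromatic in two colors, and pairwise non-adjacent, since an edge $x_ix_j$ would have to carry both colors $i$ and $j$. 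Hence the number of colors is at most $\alpha(G)$ in every graph, with no parity or walk analysis needed; your endpoint-selection instance is automatically satisfiable because the selection is forced, not chosen.
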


\begin{prop}\label{Buj2} \cite{escd1}
Let $G$ be a graph of order $n$.
\begin{itemize}
\item If $G$ is connected, then $\psi_{3c}^{'} (G )\le  n - \frac{n-1}{\Delta(G)}$ , where $\Delta(G)$ denotes the maximum degree of $G$;
\item $\psi_{3c}^{'} (G ) \le  n- i(G)$, where $i(G)$ is the \textit{independence domination number} of $G$, i.e., the minimum cardinality
      among all maximal independent sets of $G$.
\end{itemize}
\end{prop}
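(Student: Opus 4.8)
The plan is to route both inequalities through a single structural observation about optimal colorings, together with two classical facts about vertex covers and independent dominating sets. Call a vertex $w$ \emph{monochromatic} if all edges incident to $w$ carry one and the same colour, and write $\varphi(w)$ for that colour. The key lemma I would establish is that in \emph{every} $3$-consecutive edge coloring of $G$ (assume, harmlessly, that $G$ has no isolated vertices, since deleting them changes $n$ and $i(G)$ by the same amount and leaves $\psi_{3c}'$ unchanged) each edge has at least one monochromatic endpoint whose common colour is the colour of that edge. For a pendant edge the leaf is trivially monochromatic, so suppose $e=uv$ with $\deg(u),\deg(v)\ge 2$ and $c(e)=\gamma$. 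If some edge $f\neq e$ at $u$ had $c(f)\neq\gamma$ and some edge $g\neq e$ at $v$ had $c(g)\neq\gamma$, then (as $G$ is simple, $f\neq g$) the triple $f,e,g$ would be three consecutive edges with $c(e)\notin\{c(f),c(g)\}$, contradicting the coloring. Hence all edges at $u$, or all edges at $v$, share the colour $\gamma$, i.e. $u$ or $v$ is monochromatic with value $\gamma$.

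Two consequences follow immediately. First, every edge has a monochromatic endpoint, so the set $M$ of monochromatic vertices is a vertex cover, its complement is independent, and the set of colours used is exactly $\{\varphi(w):w\in M\}$. Second, if I pick one monochromatic vertex $w_\gamma$ for each colour $\gamma$, the resulting set $R$ is \emph{independent}: two adjacent monochromatic vertices would force their common edge to carry two different colours. Since $R$ has exactly one vertex per colour, $|R|=\psi_{3c}'(G)$, and therefore $\psi_{3c}'(G)\le\alpha(G)$; this already recovers, for arbitrary graphs, the upper bound of Proposition~\ref{Buj1}.

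For the connected bound I would combine $\psi_{3c}'(G)\le\alpha(G)$ with the Gallai identity $\alpha(G)=n-\tau(G)$, where $\tau(G)$ is the vertex-cover number, and with the elementary estimate $\tau(G)\ge m/\Delta(G)$: every cover vertex is incident to at most $\Delta(G)$ edges, so a cover of size $\tau$ can meet all $m$ edges only if $\tau\,\Delta(G)\ge m$. As $G$ is connected, $m\ge n-1$, whence $\tau(G)\ge (n-1)/\Delta(G)$ and $\psi_{3c}'(G)\le\alpha(G)=n-\tau(G)\le n-\frac{n-1}{\Delta(G)}$, which is the first bullet.

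The bound $\psi_{3c}'(G)\le n-i(G)$ is the delicate one, being genuinely stronger than $\psi_{3c}'(G)\le\alpha(G)$: for the double star with two adjacent centres each carrying two leaves one checks $\psi_{3c}'=3=n-i$ while $\alpha=4$. I would reduce it to producing a maximal independent set $I$ that, for every colour $\gamma$, omits at least one monochromatic vertex of colour $\gamma$; the omitted representatives are then $k=\psi_{3c}'(G)$ distinct vertices outside $I$, giving $i(G)\le|I|\le n-k$. From the set $R$ above one gets for free that $V\setminus R$ is a dominating set of size $n-k$ (a vertex missed by $V\setminus R$ together with all its neighbours would lie in the independent set $R$, hence be isolated), which already yields the weaker inequality $\psi_{3c}'(G)\le n-\gamma(G)$ with $\gamma$ the domination number. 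The hard part, and the crux of the whole proposition, is to upgrade this dominating set to an \emph{independent} dominating set without increasing its size past $n-k$; equivalently, to choose the monochromatic representatives $R$ so that $V\setminus R$ really contains a maximal independent set. The double star shows the choice matters, since selecting the wrong minimum independent dominating set makes two colours compete for a single cover vertex. I expect this step to require the defining minimality of $i(G)$: assuming no suitable $I$ exists, a Hall-type deficiency among the monochromatic vertices forced into every candidate set should be convertible, by a swap along the monochromatic edges, into an independent dominating set strictly smaller than $i(G)$ — the desired contradiction.
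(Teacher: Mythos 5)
The paper itself offers no proof of this proposition: it is imported verbatim from Bujt\'as et al.\ \cite{escd1}, so there is no internal argument to compare against and your attempt must be judged on its own merits. Your monochromatic-endpoint lemma is correct and nicely isolated: for $e=uv$ with $c(e)=\gamma$, if $u$ had an incident edge $f\neq e$ with $c(f)\neq\gamma$ and $v$ an incident edge $g\neq e$ with $c(g)\neq\gamma$, then $f,e,g$ would be three consecutive edges whose middle edge repeats neither extreme colour; hence every edge has a monochromatic endpoint of its own colour, the representatives $R$ (one per colour) form an independent set of size $k=\psi_{3c}^{'}(G)$, and the first bullet follows soundly from $\psi_{3c}^{'}(G)\le\alpha(G)=n-\tau(G)$ together with $\tau(G)\ge m/\Delta(G)\ge (n-1)/\Delta(G)$ for connected $G$. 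That half of the proposition is completely proved, and as a bonus it re-derives the upper bound of Proposition \ref{Buj1} for arbitrary graphs.

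The second bullet, however, is not proved. What you actually establish is only the weaker inequality $\psi_{3c}^{'}(G)\le n-\gamma(G)$, via the dominating set $V\setminus R$, and you say explicitly that the upgrade to $n-i(G)$ --- exhibiting a \emph{maximal independent} set of size at most $n-k$ --- is ``the hard part,'' to be handled by a Hall-type deficiency and a swap along monochromatic edges that you ``expect'' to work but never carry out. That unfinished step is precisely the content of the inequality: your own double-star example shows $n-i(G)$ can be strictly smaller than $\alpha(G)$, so no counting through $\alpha$ or $\gamma$ can substitute for it. Concretely, if you take $J$ maximal independent in $G-R$ and extend to $I=J\cup R'$ with $R'=\{w\in R:\,N(w)\cap J=\emptyset\}$, then $I$ is maximal independent, the representatives in $R\setminus R'$ lie outside $I$, and for each $w\in R'$ all neighbours of $w$ lie outside $I$; but you must now choose \emph{distinct} excluded neighbours for the members of $R'$, and nothing in your sketch rules out a Hall deficiency (several colours competing for one vertex outside $I$) or shows how such a deficiency would yield a maximal independent set smaller than $i(G)$. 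A proof that terminates in an expectation is a gap, not a proof: as it stands your proposal settles bullet one and only a strictly weaker form of bullet two, so you should either complete this missing argument or, as the paper does, cite \cite{escd1} for the result.
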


From Propositions \ref{bound_on_bipertite_graphs}, \ref{Buj1} and \ref{Buj2} we can directly conclude
the following.
\begin{cor}
Let $G$ be a connected bipartite graph of order $n \ge 2$. Then
\begin{eqnarray*}
\chi_{i}^{'}(G) & \le & n - \frac{n-1}{\Delta(G)},\\
\chi_{i}^{'}(G) & \le & n- i(G),\\
\chi_{i}^{'}(G) & \le & \alpha(G).
\end{eqnarray*}
\end{cor}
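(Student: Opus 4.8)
The plan is to route everything through the single inequality
\[
\chi_{i}^{'}(G) \le \psi_{3c}^{'}(G),
\]
valid for every connected bipartite graph $G$ without isolated vertices. Once this is in hand, each of the three displayed bounds is obtained merely by substituting one of the already-quoted upper estimates for $\psi_{3c}^{'}(G)$, so the entire content of the corollary reduces to comparing the two edge-coloring parameters.

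First I would fix a bipartition $V(G) = V_1 \cup V_2$ and note that, since $G$ is connected with $n \ge 2$, it has no isolated vertices; hence both Proposition~\ref{bound_on_bipertite_graphs} and Proposition~\ref{Buj1} apply. Proposition~\ref{bound_on_bipertite_graphs} gives $\chi_{i}^{'}(G) \le \min\{|V_1|, |V_2|\}$, while the left inequality of Proposition~\ref{Buj1} gives $\max\{|V_1|, |V_2|\} \le \psi_{3c}^{'}(G)$. Chaining these with the trivial observation $\min\{|V_1|, |V_2|\} \le \max\{|V_1|, |V_2|\}$ yields
\[
\chi_{i}^{'}(G) \le \min\{|V_1|, |V_2|\} \le \max\{|V_1|, |V_2|\} \le \psi_{3c}^{'}(G),
\]
which is exactly the desired inequality.

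With $\chi_{i}^{'}(G) \le \psi_{3c}^{'}(G)$ established, the three bounds follow directly. The first item of Proposition~\ref{Buj2} gives $\psi_{3c}^{'}(G) \le n - \frac{n-1}{\Delta(G)}$ for connected $G$, hence $\chi_{i}^{'}(G) \le n - \frac{n-1}{\Delta(G)}$; the second item gives $\psi_{3c}^{'}(G) \le n - i(G)$, hence $\chi_{i}^{'}(G) \le n - i(G)$; and the right inequality of Proposition~\ref{Buj1} gives $\psi_{3c}^{'}(G) \le \alpha(G)$, hence $\chi_{i}^{'}(G) \le \alpha(G)$.

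The only conceptual point — and the step I would flag as the crux — is recognizing that the minimization parameter $\chi_{i}^{'}$ and the maximization parameter $\psi_{3c}^{'}$ can be sandwiched together through the two bipartition class sizes, since a priori there is no direct comparison between an injective edge coloring and a $3$-consecutive edge coloring. Everything past that observation is an immediate substitution requiring no delicate estimates.
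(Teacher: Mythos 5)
Your proposal is correct and matches the paper's intended argument: the paper states the corollary follows ``directly'' from Propositions~\ref{bound_on_bipertite_graphs}, \ref{Buj1} and \ref{Buj2}, and the route it leaves implicit is exactly your chain $\chi_{i}^{'}(G)\le\min\{|V_1|,|V_2|\}\le\max\{|V_1|,|V_2|\}\le\psi_{3c}^{'}(G)$ followed by the quoted upper bounds on $\psi_{3c}^{'}(G)$. You have simply written out, with the correct hypothesis checks (connectedness guaranteeing no isolated vertices), the step the paper treats as immediate.
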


Now we introduce an upper bound on the edge injective coloring number of a graph $G$ in terms of its size
and \textit{diameter}, which we denote by $\text{diam}(G)$.

\begin{prop}
For any connected graph $G$ of size $m\geq 3$, $\chi_{i}^{'}(G) \le m-\text{diam}(G)+2$. This upper bound is
attained if and only if $G$ is 
$P_{m+1}$.
\end{prop}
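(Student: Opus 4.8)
The plan is to prove the inequality by coloring a diametral path with two colors and every other edge with its own fresh color, and then to obtain the characterization of equality from the observation that $\mathrm{diam}(G)=m$ holds precisely when $G=P_{m+1}$.

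For the bound, I would first fix a shortest path $P\colon w_0w_1\cdots w_d$ realizing $d=\mathrm{diam}(G)$, so that $P$ has exactly $d$ edges. When $d\le 2$ we have $m-d+2\ge m$, and since $\chi_{i}^{'}(G)\le |E(G)|=m$ always holds, the inequality is immediate; so I may assume $d\ge 3$. By Proposition~\ref{particular_cases}(\ref{case_1}) the path $P\cong P_{d+1}$ admits an injective edge coloring with two colors $c_1,c_2$ (concretely the periodic pattern $c_1,c_1,c_2,c_2,c_1,c_1,\dots$ along $P$). I would then assign to each of the remaining $m-d$ edges of $G$ a distinct new color, for a total of $2+(m-d)=m-d+2$ colors.

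The step that makes this coloring injective is a geodesic argument. Take any three consecutive edges $e_1=xy$, $e_2=yz$, $e_3=zu$; I must check $c(e_1)\ne c(e_3)$. If either $e_1$ or $e_3$ lies off $P$ it carries a unique color and the inequality is automatic, so suppose both $e_1,e_3\in E(P)$. Then $y$ and $z$ are vertices of $P$ and $e_2=yz$ joins them; because $P$ is a shortest path, two of its vertices $w_a,w_b$ can be adjacent only when $|a-b|\le 1$, which forces $e_2\in E(P)$ as well. Hence $e_1,e_2,e_3$ are three consecutive edges of $P$, and the two-coloring of $P$ already guarantees $c(e_1)\ne c(e_3)$. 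This proves $\chi_{i}^{'}(G)\le m-d+2$. For the forward direction of the equality, if $G=P_{m+1}$ then $d=m$ and $\chi_{i}^{'}(G)=2=m-m+2$ by Proposition~\ref{particular_cases}(\ref{case_1}), so the bound is attained.

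For the converse I would argue by contraposition: assuming $G\ne P_{m+1}$, I aim to show $\chi_{i}^{'}(G)\le m-d+1$. Since the diametral path has $d$ edges, $d=m$ would force $E(G)=E(P)$, i.e. $G=P_{m+1}$; hence $d\le m-1$ and there is at least one edge off $P$. A connected graph with $m\ge 3$ that is neither a path nor a cycle has a vertex of degree at least three, so I split into two cases. If $G=C_m$, then by Proposition~\ref{particular_cases}(\ref{case_2}) we have $\chi_{i}^{'}(G)\le 3$, while $m-d+2=\lceil m/2\rceil+2\ge 4$, so the bound is not attained. If instead $G$ has a branch vertex, I would refine the construction so as to save one further color: keeping the two-coloring of $P$, I claim the $m-d\ge 1$ off-path edges can be colored with at most $m-d-1$ extra colors, either by letting an off-path edge reuse one of $c_1,c_2$ or by letting two off-path edges forming a star about a common vertex share a single new color. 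The main obstacle is precisely making this last saving uniform: I would locate a convenient off-path edge, bound the set of colors forbidden to it by the consecutive-edge constraints using again the geodesic property of $P$, and verify that a repeated color is always available, which yields $\chi_{i}^{'}(G)\le m-d+1$ and completes the characterization.
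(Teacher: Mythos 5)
Your inequality and the forward direction of the equality are correct and follow the paper's route: two colours on a diametral path and a fresh colour on each remaining edge. Your geodesic verification that the middle edge of a potentially conflicting triple $e_1,e_2,e_3$ with $e_1,e_3\in E(P)$ must itself lie on $P$ is a detail the paper leaves implicit, and it is worth making explicit.

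The converse, however, has a genuine gap. After reducing to the case where $G$ has a vertex of degree at least three, you assert that the $m-d$ off-path edges can be coloured with at most $m-d-1$ colours, and then describe only what you \emph{would} do (``locate a convenient off-path edge \dots\ verify that a repeated colour is always available''). That claim is exactly the content of the converse, and nothing in your text proves it; it is not locally obvious, since an off-path edge near the two-coloured geodesic is constrained by triples mixing path and non-path edges. The paper closes this with an explicit case analysis: since $G$ is connected and not a path, and a geodesic admits no chords, there is a vertex $u\notin V(P_d)$ adjacent to $P_d$, and diametrality forces $u$ to have at most three neighbours on $P_d$, lying in a window $x_i,x_{i+1},x_{i+2}$. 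With one neighbour $x_i$, shift the period-four pattern so that $x_{i-1}x_i$ and $x_ix_{i+1}$ share a colour $c$ and give $ux_i$ that same colour $c$, for $2+(m-d-1)$ colours in all; with neighbours $x_i,x_{i+2}$, give $ux_i,ux_{i+2}$ one shared new colour (feasible because $x_ix_{i+2}\notin E(G)$ on a geodesic); with neighbours $x_i,x_{i+1}$, two-colour the path rerouted through $u$ and spend one new colour on the chord $x_ix_{i+1}$; with three neighbours, use one new colour on $ux_i,ux_{i+2}$ and another on $ux_{i+1}$. Each case yields $\chi_{i}^{'}(G)\le m-\mathrm{diam}(G)+1$. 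Your plan also silently assumes the diametral path is long: when $\mathrm{diam}(G)\le 2$ your trivial estimate $\chi_{i}^{'}(G)\le m$ gives no strictness, and the paper disposes of this case separately via Proposition~\ref{complete_graphs} ($\mathrm{diam}(G)=1$ makes $G$ complete, whence $m-1+2>m=\chi_{i}^{'}(G)$; $\mathrm{diam}(G)=2$ makes $G$ non-complete, whence $\chi_{i}^{'}(G)<m=m-2+2$). On the credit side, your separate dispatch of $G=C_m$ is correct and in fact covers a configuration the paper's case (i) treats loosely (its claim that the unique neighbour of $u$ must be interior to $P_d$ can fail, e.g.\ on $C_6$), but this does not substitute for the missing uniform colour-saving argument.
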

\begin{proof}
Let $P_d$ be a diametral path of $G$. We can color the path $P_d$ with 
2 colors.
Coloring all the other edges differently with $m-\text{diam}(G)$ colors, we produce an injective edge coloring of $G$,
and then $\chi_{i}^{'}(G)\leq m-\text{diam}(G)+2$.\\
The proof of the last part of this proposition can be divided in two cases:
\begin{enumerate}
\item 
If $G$ is a path, with $n \ge 4$, it follows from Proposition~\ref{particular_cases}- item 1 that $\chi_{i}^{'}(P_n) = 2$ and, since $\text{diam}(G)=m$,
the result holds.
\item Let us assume that $G$ is not a path.
      \begin{itemize}
      \item If $\text{diam}(G)\leq 2$, Proposition \ref{complete_graphs}  implies that 
            $$
            \chi_{i}^{'}(G) < m-\text{diam}(G)+2.  
            $$
            In fact, if $\text{diam}(G)=1$, then $G$ is complete and thus
            $m-\text{diam}(G)+2 > m = \chi_{i}^{'}(G)$. If $\text{diam}(G)=2$, then $G$ is not complete and
            thus $m-\text{diam}(G)+2 = m > \chi_{i}^{'}(G)$.
      \item If $\text{diam}(G)> 2$, 
            consider a diametral path $P_d=x_1, \ldots, x_{d+1}$.
            Since $G$ is connected and is not a path, then there exists a vertex $u \not \in V(P_d)$ which has (i) one, (ii) two
            or at most (iii) three neighbors in $P_d$, otherwise $P_d$ is not diametral.
            \begin{enumerate}
            \item[(i)] Suppose $u$ has a unique neighbor, say $x_i$, in $P_d$. As $P_d$ is a diametral path, $x_i$ has
                   to be an interior vertex of $P_d$, i.e., $i\not=1, d+1$, and
                the edges of $P_d$ can be colored in a way
                  such that $x_{i-1}x_i$ and $x_ix_{i+1}$ have the same color $c$ and this color $c$ can also be
                  used for coloring the edge $ux_i$.
                  The remaining $m-\text{diam(G)}-1$ edges can be colored with no more than $m-\text{diam(G)}-1$ colors, and
                  thus producing an i-egde coloring with at most $2 + m-\text{diam}(G)-1$ colors, and therefore
                  $\chi_{i}^{'}(G) < m-\text{diam}(G)+2$.
            \item[(ii)] If $u$ has two neighbors in $P_d$, say $x_i$ and $x_j$, then they must have at most one vertex
                  between them, i.e., $j=i+1$ or $j=i+2$. If $j=i+2$, the two edges $ux_i$ and $ux_{j}$ can be colored
                  with the same color, different from each of the two colors used for the edges of $P_d$, and using a different
                  color for each of the $m-\text{diam(G)}-2$ other edges. Thus,
                 $\chi_{i}^{'}(G) \leq 2 +1 +m-\text{diam(G)}-2 < m-\text{diam}(G)+2$.
If $j=i+1$, then use two colors to color the edges on path $(P_d\setminus x_ix_j)\cup ux_i\cup ux_j$, a new color for edge $x_ix_j$
and a different color for each of the remaining $m -\text{diam(G)}-2$ edges. Again,
$\chi_{i}^{'}(G) \leq 2 +1 +m-\text{diam(G)}-2 < m-\text{diam}(G)+2$.
            \item[(iii)] If $u$ has three neighbors in $P_d$, then they must be consecutive (otherwise $P_d$ is not
                  diametral), say $x_i, x_{i+1}, x_{i+2}$.  Coloring again the edges of $P_d$ 
                  using two
                  colors, say $c_1$ and $c_2$, edges $ux_i$ and $ux_{i+2}$ can be colored with an additional color $c_3$,
                  and edge $x_{i+1}u$ with a different color $c_4$.
                  Using a different color for each of the $m- \text{diam(G)}-3$, we can conclude that
                  $\chi_{i}^{'}(G) \le 2+1+1+ m-\text{diam}(G)-3 < m-\text{diam}(G) + 2$.
            \end{enumerate}
  \end{itemize}
\end{enumerate}
\end{proof}

\begin{prop}\label{ub_for_trees}
For any tree $T$ of order $n \ge 2$, $1 \leq \chi_{i}^{'}(T) \le 3$.
\end{prop}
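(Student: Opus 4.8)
The plan is to prove both inequalities. The lower bound $\chi_i'(T) \ge 1$ is immediate since any tree with $n \ge 2$ has at least one edge, so at least one color is required. The real content is the upper bound $\chi_i'(T) \le 3$, and I would establish this by exhibiting an explicit $3$-coloring scheme for an arbitrary tree, reusing the cyclic-coloring idea already developed in the proof of Proposition~\ref{unicyclic_eip_grapj}.

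First I would root the tree $T$ at an arbitrary vertex $r$ and orient edges by their distance from the root, partitioning $E(T)$ into \emph{levels} $L_1, L_2, \ldots$, where $L_\ell$ consists of the edges joining a vertex at depth $\ell-1$ to a vertex at depth $\ell$. I would then color every edge in level $L_\ell$ with color $c_{(\ell \bmod 3)}$, that is, cycling through the three colors $c_1, c_2, c_3$ as the depth increases. The key observation is that any three consecutive edges $e_1, e_2, e_3$ in a tree form a genuine path $x\,y\,z\,u$ (no triangles exist in a tree), and in such a path the three edges must occupy three consecutive levels $L_{\ell-1}, L_\ell, L_{\ell+1}$ when read away from the root, or else a ``turn'' occurs at the middle vertex. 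I would need to check that in every configuration the outer edges $e_1$ and $e_3$ land in levels of different residue mod $3$, so that $c(e_1) \ne c(e_3)$.

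The main obstacle is exactly that case analysis at the middle vertex $y$ of the path $e_1 e_2 e_3$: depending on whether $y$ lies above or below its neighbors in the rooted orientation, the three edges may not simply span three consecutive levels. Concretely, if the path goes monotonically downward (or upward) through the three edges, their levels are $\ell, \ell+1, \ell+2$, whose residues mod $3$ are all distinct, so $c(e_1)\ne c(e_3)$ holds. The delicate configurations are those where the path changes direction: for instance, if $y$ is the parent of both $x$ and one of $z$, or if the middle edge goes up and the outer edges branch down. I would enumerate these local shapes (there are only a handful, determined by which of the four vertices $x,y,z,u$ is the common ancestor) and verify in each that the two outer edges receive distinct colors under the depth-mod-$3$ rule. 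Since a level-based cyclic assignment guarantees that edges two levels apart differ, the only risk is the ``V-shape'' where $e_1$ and $e_3$ sit at the same depth; I would handle this by noting that such a shape forces $e_2$ to be their common parent edge, so $e_1$ and $e_3$ share the endpoint $y$ and are at the same level, and I must confirm the coloring still separates them — which it may not under a naive rule, so I would refine the assignment (e.g. breaking ties among sibling subtrees) to cover it.

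An alternative, cleaner approach that avoids the sibling subtlety is to mimic Proposition~\ref{unicyclic_eip_grapj} directly: pick any edge as a ``seed,'' and propagate colors outward along the tree so that consecutive edges on every path cycle through $c_1, c_2, c_3, c_1, \ldots$, assigning each newly reached edge the next color in the cycle relative to the edge through which it was reached. Because a tree is acyclic, every edge is reached by a unique path from the seed, so this assignment is well defined and automatically gives consecutive triples of the form $c_i, c_{i+1}, c_{i+2}$ with distinct endpoints, yielding $c(e_1)\ne c(e_3)$. I expect this breadth-first propagation argument to be the shortest route to the bound, with the verification that no bad consecutive triple arises being the one step warranting care.
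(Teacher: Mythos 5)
Your preferred route, the edge-seeded propagation, fails, and it fails at the very first step. Let the seed be $e_0=yz$ with color $c_1$. By your rule, every edge reached through $y$ and every edge reached through $z$ receives the next color in the cycle, namely $c_2$. Hence for any neighbor $x\ne z$ of $y$ and any neighbor $u\ne y$ of $z$, the edges $xy$, $yz$, $zu$ are consecutive (they form a path on four distinct vertices, since $T$ has no triangle), and both outer edges carry $c_2$: the triple reads $c_2,c_1,c_2$, not $c_i,c_{i+1},c_{i+2}$, so $c(e_1)=c(e_3)$ and injectivity is violated. The claim that acyclicity makes all triples ``automatically'' monotone in the cycle is wrong precisely for every triple whose middle edge is the seed: propagating symmetrically outward from an \emph{edge} assigns equal colors at equal distance on its two sides, and those two sides flank a middle edge, which is exactly the pair the injective condition constrains. (Propagating from a \emph{vertex} is harmless, because a path through the root has its two equally colored edges adjacent to each other, and adjacent edges are unconstrained by injectivity.) The trivial lower bound $\chi_{i}^{'}(T)\ge 1$ is of course fine.

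Ironically, your first approach, which you half-abandon, works as stated and needs no refinement: in a tree the outer edges of a consecutive triple can never share a vertex (that would force $x=u$, i.e. a triangle), so the ``V-shape'' with $e_1,e_3$ sibling edges at equal depth is never a constrained pair --- siblings are merely adjacent. Writing the depth steps along $x,y,z,u$ as signs, a step $+$ immediately followed by $-$ would give the middle vertex of that turn two distinct parents, which is impossible; so the only patterns are $(+,+,+)$, $(-,-,-)$, $(-,+,+)$ and $(-,-,+)$, with edge levels $(\ell,\ell+1,\ell+2)$, $(\ell,\ell-1,\ell-2)$, $(\ell,\ell,\ell+1)$ and $(\ell,\ell-1,\ell-1)$ respectively. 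In every case the levels of $e_1$ and $e_3$ differ by $1$ or $2$ modulo $3$, so the naive depth-mod-$3$ rule is already an injective edge coloring; completing this enumeration would yield a valid, self-contained proof. That proof is genuinely different from the paper's, which for $n\ge 3$ simply adds one edge to $T$ to create a unicyclic graph $H$ containing a triangle and combines Proposition~\ref{unicyclic_eip_grapj} with subgraph monotonicity (Proposition~\ref{subgraph_result}) to get $\chi_{i}^{'}(T)\le\chi_{i}^{'}(H)=3$. Note, finally, that the proof of Proposition~\ref{unicyclic_eip_grapj} is your propagation idea done correctly: it seeds the cyclic coloring from the three edges of a triangle colored $c_1,c_2,c_3$, and that asymmetric seed is exactly what eliminates the $c_2,c_1,c_2$ collision that sinks the single-edge seed.
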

\begin{proof}
If $n= 2$, $\chi_{i}^{'}(T) =1$.
If $n \geq 3$, an edge can be added to $T$ in such a way that the resulting graph $H$ includes a triangle. We then have,
$\chi_{i}^{'}(T) \le \chi_{i}^{'}(H) $, and using Proposition ~\ref{unicyclic_eip_grapj}, $ \chi_{i}^{'}(H) = 3$.
\end{proof}

These lower and upper bounds are sharp. According to Proposition~\ref{prop_2}, the stars are the only
connected graphs $G$ such that $\chi_{i}^{'}(G)=1$.
Regarding the upper bound, the tree $T'$ of  
Figure~\ref{T1_tree} is a minimum  size tree with $\chi_{i}^{'}(T')=3$.

\begin{figure}[th]
\centering
\begin{tikzpicture}[every node/.append style={circle,  fill, inner sep=1.5pt, minimum size=1.5pt},scale=1]
\draw (0,1) -- (1,1) -- (2,0.5) -- (3,0.5) -- (4,1) -- (5,1);
\draw (1,0) -- (2,0.5);
\draw (3,0.5) -- (4,0);
\node (pcan) at (0,1) {};
\node (pcan) at (1,0) {};
\node (pcan) at (1,1) {};
\node (pcan) at (2,0.5) {};
\node (pcan) at (3,0.5) {};
\node (pcan) at (4,0) {};
\node (pcan) at (4,1) {};
\node (pcan) at (5,1) {};
\end{tikzpicture}
\caption{a minimum  size tree $T'$ with $\chi_{i}^{'}(T')=3$.}\label{T1_tree}
\end{figure}
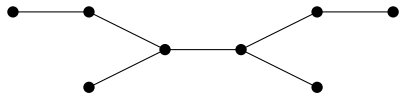

Since for a subgraph $H$ of a graph $G$, any 3-consecutive edges of $H$ are also 3-consecutive edges of $G$, we have the following result.
\begin{prop}\label{subgraph_result}
If $H$ is a  subgraph of a connected graph $G$, then $\chi_{i}^{'}(H)\leq\chi_{i}^{'}(G)$.
\end{prop}
As immediate consequence, we have the corollary below.
\begin{cor}\label{cor_10}
Let $G$ be a connected graph of order $n$. 
\begin{enumerate}
\item If $x$ is an edge of $G$, then $\chi_{i}^{'}(G) \le \chi_{i}^{'}(G+x).$\label{cor_10.1}
\item If $G$ includes a cycle $C_p$ and $4 \le p \not\equiv 0 ~ (\mbox{mod}~ 4)$, then $\chi_{i}^{'}{(G)}\ge 3$. \label{cor_10.3}
\item If $G$ includes a complete graph $K_{p}$, then $\chi_{i}^{'}{(G)} \ge p(p-1)/2$.\label{cor_10.4}
\item If $G$ includes the tree depicted in Figure~\ref{T1_tree}, then $\chi_{i}^{'}{(G)} \ge 3$.\label{cor_10.2}
\item If $G$ is a tree $T,$ which includes the subtree $T'$ depicted in Figure~\ref{T1_tree}, then $\chi_{i}^{'}(T)=3$.\label{cor_10.5}
\end{enumerate}
\end{cor}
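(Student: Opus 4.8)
All five parts are consequences of the monotonicity recorded in Proposition~\ref{subgraph_result}: if $H$ is a subgraph of a connected graph $G$, then $\chi_{i}^{'}(H) \le \chi_{i}^{'}(G)$. The plan is, in each case, to exhibit a suitable subgraph whose injective edge chromatic index has already been computed and then invoke this inequality. For part~\ref{cor_10.1}, I would observe that $G$ is a subgraph of $G+x$, so applying Proposition~\ref{subgraph_result} with $H=G$ inside the connected graph $G+x$ gives $\chi_{i}^{'}(G)\le\chi_{i}^{'}(G+x)$ at once. For part~\ref{cor_10.3}, the cycle $C_p$ is a subgraph of $G$, and by Proposition~\ref{particular_cases}, item~\ref{case_2}, $\chi_{i}^{'}(C_p)=3$ whenever $4 \le p\not\equiv 0\pmod 4$; monotonicity then yields $\chi_{i}^{'}(G)\ge 3$. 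For part~\ref{cor_10.4}, $K_p$ is a subgraph of $G$ and Proposition~\ref{complete_graphs} gives $\chi_{i}^{'}(K_p)=p(p-1)/2$, so $\chi_{i}^{'}(G)\ge p(p-1)/2$ (this also re-derives Proposition~\ref{PropEIP1}).

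The only part requiring genuine work is part~\ref{cor_10.2}, where I must first confirm that the tree $T'$ of Figure~\ref{T1_tree} satisfies $\chi_{i}^{'}(T')=3$; once this is established, monotonicity finishes the statement, and part~\ref{cor_10.5} follows by combining it with the upper bound $\chi_{i}^{'}(T)\le 3$ of Proposition~\ref{ub_for_trees}. To show $\chi_{i}^{'}(T')=3$, the upper bound is immediate from Proposition~\ref{ub_for_trees}, so the crux is ruling out a $2$-coloring.

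Write the spine of $T'$ as the path $a\,b\,c\,d\,e\,f$, with a leaf $g$ attached at $c$ and a leaf $h$ attached at $d$, so that $c$ and $d$ are the two vertices of degree $3$. Assuming only two colors are available, I would read off the constraints coming from the four paths $P_4$ through the central edge $cd$, namely $b\,c\,d\,e$, $g\,c\,d\,e$, $b\,c\,d\,h$ and $g\,c\,d\,h$; in a $2$-coloring these force the two edges at $c$ (that is, $bc$ and $cg$) to share one color and the two edges at $d$ (that is, $de$ and $dh$) to share the other. Feeding this into the paths $a\,b\,c\,g$ and $h\,d\,e\,f$ then pins down the colors of $ab$ and $ef$, and finally the two paths $a\,b\,c\,d$ and $c\,d\,e\,f$ force the central edge $cd$ to take both colors simultaneously, a contradiction. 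Hence no $2$-coloring exists and $\chi_{i}^{'}(T')=3$, whence $\chi_{i}^{'}(G)\ge 3$ whenever $G$ contains $T'$.

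The main obstacle is this forcing argument for $T'$: every other part is a one-line appeal to Proposition~\ref{subgraph_result} together with an already-established exact value, whereas the lower bound $\chi_{i}^{'}(T')\ge 3$ requires carefully tracking the propagation of the two color classes along the consecutive-edge constraints until the contradiction surfaces at the edge $cd$.
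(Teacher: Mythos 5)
Your proposal is correct and takes essentially the same route as the paper, which treats the corollary as an immediate consequence of the subgraph monotonicity in Proposition~\ref{subgraph_result} combined with the known exact values (Proposition~\ref{particular_cases} for $C_p$, Proposition~\ref{complete_graphs} for $K_p$), and, for item~\ref{cor_10.5}, the same sandwich $3=\chi_{i}^{'}(T')\le\chi_{i}^{'}(T)\le 3$ via Proposition~\ref{ub_for_trees}. The only point where you go beyond the paper is your explicit forcing argument that the tree $T'$ of Figure~\ref{T1_tree} admits no $2$-coloring (the paper simply asserts $\chi_{i}^{'}(T')=3$), and that case analysis checks out: the four $P_4$'s through the central edge force $bc$ and $cg$ into one class and $de$, $dh$ into the other, after which $ab$, $ef$ and finally the contradictory demands on $cd$ follow exactly as you state.
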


In computer science a \textit{perfect binary tree} is a tree data structure with exactly one vertex of degree two and where each of the
other vertices has degree one or three. Now we have another corollary.
\begin{cor}
Let $T$ be a perfect binary tree with $diam(T) \ge 7$. Then $\chi_{i}^{'}(T)=3$.
\end{cor}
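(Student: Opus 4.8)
The plan is to reduce the statement to the results already established for the tree $T'$ of Figure~\ref{T1_tree}. By Proposition~\ref{ub_for_trees} we have $\chi_{i}^{'}(T)\le 3$ for every tree, so only the lower bound $\chi_{i}^{'}(T)\ge 3$ is at stake; and Corollary~\ref{cor_10}, item~\ref{cor_10.5}, tells us that any tree containing $T'$ as a subtree has injective edge chromatic index exactly $3$. Hence the whole problem collapses to one structural claim: \emph{every perfect binary tree $T$ with $\mathrm{diam}(T)\ge 7$ contains $T'$ as a subtree.} Once this is shown, the two inequalities pinch $\chi_{i}^{'}(T)$ to $3$.

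To locate a copy of $T'$, I would work along a diametral path $P=v_0v_1\cdots v_L$ of $T$, where $L=\mathrm{diam}(T)\ge 7$. Since $P$ is a longest path, its endpoints $v_0,v_L$ are leaves, and by the defining property of a perfect binary tree every interior vertex of $T$ has degree $1$ or $3$ except for the unique vertex of degree $2$ (the root). In particular, among the interior path vertices at most one fails to have degree $3$. The tree $T'$ is a path $a\,b\,c\,d\,e\,f$ on six vertices together with a pendant vertex at $c$ and a pendant vertex at $d$, so what I need is an edge $cd$ of $P$ whose two endpoints both have degree $3$ in $T$ and which leaves at least two further path-vertices on each side.

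The key step is therefore to select consecutive vertices $c=v_i$, $d=v_{i+1}$ with $2\le i\le L-3$, both of degree $3$. Because $L\ge 7$ the window $\{v_2,v_3,v_4,v_5\}$ is available, and since at most one of these four vertices can be the unique degree-$2$ root, a short case check shows an admissible adjacent pair always exists. Having fixed $c,d$, I would embed $T'$ by sending the spine $a,b,c,d,e,f$ to $v_{i-2},v_{i-1},v_i,v_{i+1},v_{i+2},v_{i+3}$ and the two pendant vertices to the off-path neighbours $w_c$ of $v_i$ and $w_d$ of $v_{i+1}$, which exist precisely because $\deg(v_i)=\deg(v_{i+1})=3$. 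All eight images are distinct (in a tree $w_c\neq w_d$, otherwise $v_iv_{i+1}w_c$ would be a triangle) and all seven edges of $T'$ are realised, so this is a genuine subtree isomorphic to $T'$. Invoking Corollary~\ref{cor_10}, item~\ref{cor_10.5} (or item~\ref{cor_10.2} together with Proposition~\ref{ub_for_trees}) then yields $\chi_{i}^{'}(T)=3$.

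I expect the only real obstacle to be bookkeeping rather than conceptual difficulty: one must guarantee that the adjacent degree-$3$ pair can always be positioned with room for two spine vertices on each side while avoiding the single degree-$2$ vertex, which is exactly where the hypothesis $\mathrm{diam}(T)\ge 7$ (equivalently, a path on at least eight vertices) is used. It is also worth verifying that the loose definition of ``perfect binary tree'' adopted here---one degree-$2$ vertex, all others of degree $1$ or $3$---suffices; the argument uses only that interior path vertices are forced to have degree $3$ apart from the root, so no assumption that all leaves lie at a common depth is needed.
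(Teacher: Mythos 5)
Your proposal is correct and takes essentially the same route as the paper: the paper's proof simply observes that every perfect binary tree with $\mathrm{diam}(T)\ge 7$ contains the tree $T'$ of Figure~\ref{T1_tree} as a subtree and then invokes Corollary~\ref{cor_10}, item~\ref{cor_10.5}. Your diametral-path argument (choosing an adjacent degree-$3$ pair in the window $\{v_2,\dots,v_5\}$ while dodging the unique degree-$2$ vertex) merely supplies the routine verification of that containment, which the paper states without detail.
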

\begin{proof}
Note that every perfect binary tree $T$ with $diam(T) \ge 7$ has to include the tree depicted in Figure~\ref{T1_tree} as induced subgraph.
Therefore, from Corollary~\ref{cor_10} - item \ref{cor_10.5}, the result follows.
\end{proof}

We proceed deriving a characterization of the injective edge chromatic index, which is checkable
in polynomial time for trees.

Let $G$ be a graph of size $m\geq 1$, $\bar{G}$ the graph with $m$ vertices corresponding to the edges of $G$
and where, for every pair of vertices $x,y\in V(\bar{G})$, $xy\in E(\bar{G})$ if and only if there is an
edge $e\in E(G)$ such that $x, e, y$ are consecutive edges of $G$.  We obviously have the following.
\begin{lemma}\label{lemmaGbarG}
If $G$ is a graph of size $m\geq 1$, $\chi_{i}^{'}(G)= \chi(\bar{G})$.
\end{lemma}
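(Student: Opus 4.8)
The plan is to observe that $\bar{G}$ is precisely the \emph{constraint graph} of the injective edge coloring problem, so that injective edge colorings of $G$ are in exact correspondence with proper vertex colorings of $\bar{G}$, after which the equality of the two parameters is immediate. Since $V(\bar{G})$ is by construction the set $E(G)$, every map $c : E(G) \to \mathcal{C}$ is literally the same object as a map $\tilde{c} : V(\bar{G}) \to \mathcal{C}$; thus the entire content reduces to showing that $c$ is an injective edge coloring of $G$ if and only if $\tilde{c}$ is a proper vertex coloring of $\bar{G}$.

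First I would unwind the forward direction. Suppose $c$ is an injective edge coloring of $G$ and let $xy \in E(\bar{G})$. By the definition of $\bar{G}$ there is an edge $e \in E(G)$ with $x, e, y$ consecutive in $G$; the injectivity condition then forces $c(x) \neq c(y)$, i.e.\ $\tilde{c}$ assigns distinct colors to the adjacent vertices $x, y$. As $xy$ was arbitrary, $\tilde{c}$ is a proper coloring of $\bar{G}$.

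Conversely, suppose $\tilde{c}$ is a proper coloring of $\bar{G}$ and let $e_1, e_2, e_3$ be any three consecutive edges of $G$. Taking $x = e_1$, $e = e_2$, $y = e_3$ in the definition of $\bar{G}$ shows $e_1 e_3 \in E(\bar{G})$, so properness of $\tilde{c}$ yields $c(e_1) \neq c(e_3)$; hence $c$ is an injective edge coloring of $G$. The single point that deserves a word of care is that adjacency in $\bar{G}$ is symmetric while ``consecutive'' is stated for an ordered triple: reversing a path (or triangle) $x, e, y$ produces the consecutive triple $y, e, x$, so the relation defining $E(\bar{G})$ is symmetric and $\bar{G}$ is a well-defined undirected graph, matching the symmetric constraint $c(e_1) \neq c(e_3)$.

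Finally I would conclude. The two implications above are mutually inverse and neither changes the set of colors actually used, so a coloring with a given number of colors exists on one side precisely when it exists on the other; minimizing over each side gives $\chi_{i}^{'}(G) = \chi(\bar{G})$. There is no substantive obstacle here, since the statement is genuinely a reformulation rather than a theorem with hidden difficulty; the only thing to remain vigilant about is the symmetry remark above and the verification that passing between the two formulations neither drops a required constraint nor introduces a spurious one.
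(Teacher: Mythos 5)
Your proof is correct: it spells out exactly the definitional correspondence (colorings of $E(G)$ are vertex colorings of $\bar{G}$, and the injective condition is precisely properness) that the paper itself treats as immediate, stating the lemma with no proof beyond ``We obviously have the following.'' Your attention to the symmetry of the consecutiveness relation is a reasonable extra check, and nothing in your argument deviates from the intended reading.
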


From Lemma \ref{lemmaGbarG} we can conclude the following.
\begin{prop}\label{propGbarG}
If $G$ is a graph of size $m\geq 1$, then $\chi_{i}^{'}(G)\leq 2$ if and only if $\bar{G}$ is bipartite.
\end{prop}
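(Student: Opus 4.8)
The plan is to collapse the entire statement onto the classical fact that two-colorability and bipartiteness coincide, using the auxiliary graph $\bar{G}$ that has just been introduced. First I would invoke Lemma~\ref{lemmaGbarG}, which asserts $\chi_{i}^{'}(G) = \chi(\bar{G})$. Since $m \ge 1$ guarantees that $\bar{G}$ has at least one vertex, $\chi(\bar{G})$ is well defined, and the equality immediately rewrites the hypothesis $\chi_{i}^{'}(G) \le 2$ as $\chi(\bar{G}) \le 2$. Thus the proposition reduces to the single assertion that $\chi(\bar{G}) \le 2$ if and only if $\bar{G}$ is bipartite.

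Then I would verify both directions of this equivalence directly. For the forward direction, assume $\chi(\bar{G}) \le 2$; a proper vertex coloring of $\bar{G}$ with at most two colors partitions $V(\bar{G})$ into (at most two) color classes, each of which is an independent set, and this exhibits $\bar{G}$ as bipartite (if only one color is actually used, i.e.\ $\bar{G}$ has no edges, one part is taken empty). For the converse, assume $\bar{G}$ is bipartite with parts $A$ and $B$; assigning one color to every vertex of $A$ and a second color to every vertex of $B$ is a proper coloring, because each edge of $\bar{G}$ joins $A$ to $B$, and hence $\chi(\bar{G}) \le 2$.

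I do not expect any genuine obstacle: the mathematical content peculiar to the injective edge chromatic index has already been absorbed into Lemma~\ref{lemmaGbarG}, and what remains is the textbook characterization of bipartite graphs as exactly the two-colorable ones. The only points deserving a line of care are the degenerate cases, namely confirming that $\chi(\bar{G}) = 1$ (which occurs precisely when $\bar{G}$ has no edges) is correctly counted as bipartite, and noting that the hypothesis $m \ge 1$ is what keeps $\bar{G}$ nonempty so that the reduction through Lemma~\ref{lemmaGbarG} is legitimate.
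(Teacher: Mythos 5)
Your proof is correct and follows essentially the same route as the paper: both invoke Lemma~\ref{lemmaGbarG} to translate $\chi_{i}^{'}(G)\leq 2$ into $\chi(\bar{G})\leq 2$ and then apply the textbook equivalence of two-colorability with bipartiteness, including the edgeless case $\chi(\bar{G})=1$. You merely spell out the standard details that the paper leaves implicit.
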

\begin{proof}
Note that the chromatic number of a graph with no edges is 1, and is equal to 2 if and only if it has at least one edge and is
bipartite. Lemma \ref{lemmaGbarG} completes the proof.
\end{proof}

We therefore have the following characterization of graphs having injective edge chromatic index equal to 2.
\begin{prop}\label{charact_graphs}
Let $G$ be a graph of size $m\geq 1$. Then, $\chi_{i}^{'}(G)= 2$ if and only if  $G$ is not a disjoint union of stars
and $\bar{G}$ has no odd cycle.
\end{prop}
\begin{proof}
Proposition \ref{prop_2} states that $\chi_{i}^{'}(G)= 1$ if and only if $G$ is a disjoint union of stars.
Proposition  \ref{propGbarG} states that if $\bar{G}$ is bipartite then $\chi_{i}^{'}(G) \leq  2$.
\end{proof}

Tacking into account Propositions \ref{prop_2} and \ref{ub_for_trees},
Proposition \ref{charact_graphs} reads for trees as follows.

\begin{prop}\label{charact_trees}
Let $T$ be a tree. Then, either
\begin{itemize}
\item $\chi_{i}^{'}(T)= 1$ if $T$ is a star, or
\item $\chi_{i}^{'}(T)= 3$ if $\bar{T}$ includes an odd cycle, or
\item  $\chi_{i}^{'}(T)= 2$, in any other case.
\end{itemize}
\end{prop}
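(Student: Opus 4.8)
The plan is to obtain this result as an immediate specialization of Proposition~\ref{charact_graphs} to the class of trees, using the bounds already established for trees. First I would recall that Proposition~\ref{ub_for_trees} guarantees $1 \le \chi_{i}^{'}(T) \le 3$ for every tree $T$ of order $n \ge 2$, so there are only three possible values to distinguish, and the task reduces to matching each value with a structural condition on $T$ and $\bar{T}$.

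For the first bullet, I would invoke Proposition~\ref{prop_2}: among connected graphs, $\chi_{i}^{'}(T)=1$ holds exactly when $T$ is a disjoint union of stars, which for a (connected) tree means $T$ is a single star $K_{1,n-1}$. For the remaining two bullets I would split according to whether $\bar{T}$ is bipartite. By Proposition~\ref{propGbarG} we have $\chi_{i}^{'}(T)\le 2$ precisely when $\bar{T}$ is bipartite, equivalently when $\bar{T}$ has no odd cycle. Hence if $T$ is not a star and $\bar{T}$ has no odd cycle, then $\chi_{i}^{'}(T)$ is neither $1$ (by the star characterization) nor larger than $2$, forcing $\chi_{i}^{'}(T)=2$; this is exactly the content of Proposition~\ref{charact_graphs} restricted to trees, and covers the ``any other case'' bullet.

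The middle bullet follows by contraposition: if $\bar{T}$ includes an odd cycle, then $\bar{T}$ is not bipartite, so $\chi(\bar{T}) \ge 3$, and by Lemma~\ref{lemmaGbarG} we get $\chi_{i}^{'}(T) = \chi(\bar{T}) \ge 3$. Combining this lower bound with the upper bound $\chi_{i}^{'}(T)\le 3$ from Proposition~\ref{ub_for_trees} pins the value down to $\chi_{i}^{'}(T)=3$. I would then check that the three listed conditions are mutually exclusive and exhaustive: a star has an empty or edgeless $\bar{T}$ and so cannot contain an odd cycle, while a non-star tree either has a bipartite $\bar{T}$ (value $2$) or a non-bipartite $\bar{T}$, i.e.\ one containing an odd cycle (value $3$).

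I do not expect any genuine obstacle here, since the statement is essentially a repackaging of earlier propositions. The only point requiring a moment of care is the interplay between the ``$T$ is a star'' case and the odd-cycle case: one must confirm that a star is never inadvertently placed in the second or third bullet, which is clear because the line graph-type construction $\bar{T}$ for a star has no edges at all (a star contains no three consecutive edges), hence no odd cycle. With that consistency check in place, the trichotomy is complete.
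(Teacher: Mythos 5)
Your proposal is correct and follows essentially the same route as the paper, which presents Proposition~\ref{charact_trees} as an immediate consequence of Propositions~\ref{prop_2}, \ref{ub_for_trees} and \ref{charact_graphs}; you merely spell out the same deduction, additionally tracing the odd-cycle case back through Lemma~\ref{lemmaGbarG} and Proposition~\ref{propGbarG}, which are the ingredients behind Proposition~\ref{charact_graphs} anyway. Your consistency check that $\bar{T}$ is edgeless when $T$ is a star (so the three cases are mutually exclusive) is a sound, if minor, addition that the paper leaves implicit.
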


For example, the graph $\bar{T'}$ that is obtained from the tree $T'$ of
Figure~\ref{T1_tree}, which has $\chi_{i}^{'}(T')= 3$, includes cycles $C_5$ and $C_7$.

Note that Proposition \ref{charact_trees} gives a polynomial time algorithm
to determine the injective edge chromatic index for trees.

\vskip1.ex

The next result relates the injective edge chromatic index of a graph and of its square.

Let us denote the \textit{distance} between the vertices $u$ and $v$ in $G$ by $d_{G}(u, v)$.
The \textit{square} of a simple graph $G$ is the simple graph $G^{2}$, where $e=uv$ is an edge in $G^{2}$
if and only if $d_{G}(u, v)\leq 2$. Using this concept and this notation we have the corollary.

\begin{cor}
For any connected graph $G$, $\chi_{i}^{'}(G)\leq \chi_{i}^{'}(G^{2})$.
\end{cor}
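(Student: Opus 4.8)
The plan is to realize $G$ as a subgraph of its square $G^2$ and then invoke the monotonicity of the injective edge chromatic index under passing to subgraphs, namely Proposition~\ref{subgraph_result}. The whole argument reduces to checking the two hypotheses of that proposition: that $G$ is a subgraph of $G^2$, and that the host graph $G^2$ is connected.

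First I would observe that $G$ and $G^2$ share the same vertex set, and that every edge of $G$ is also an edge of $G^2$. Indeed, if $uv\in E(G)$, then $d_G(u,v)=1\leq 2$, so by the defining condition of the square $uv\in E(G^2)$. Hence $E(G)\subseteq E(G^2)$ and $G$ is a spanning subgraph of $G^2$. Next I would note that $G^2$ is connected: since $G$ is connected and $E(G)\subseteq E(G^2)$, any path in $G$ between two vertices is also a path in $G^2$, so connectivity is inherited.

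With both hypotheses verified, applying Proposition~\ref{subgraph_result} with $H=G$ and the connected graph $G^2$ as host yields immediately $\chi_{i}^{'}(G)\leq\chi_{i}^{'}(G^2)$, which is the asserted inequality. I do not anticipate any genuine obstacle here; the only point requiring a moment's care is confirming the inclusion $E(G)\subseteq E(G^2)$ directly from the distance-at-most-$2$ definition of the square, together with the connectivity of $G^2$ that is needed so that the cited proposition applies.
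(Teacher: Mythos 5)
Your proposal is correct and matches the paper's own proof exactly: both observe that $G$ is a subgraph of $G^{2}$ and then apply Proposition~\ref{subgraph_result}. Your verification of $E(G)\subseteq E(G^{2})$ from the distance condition and of the connectivity of $G^{2}$ (needed for the hypothesis of that proposition) simply makes explicit what the paper leaves implicit.
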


\begin{proof}
Notice that $G$ is a subgraph of $G^2$. Therefore, applying Proposition~\ref{subgraph_result}, the result follows.
\end{proof}

Previously we have considered the unicyclic graphs which include a triangle and we proved
that those are  $\omega^{'}$EIC-graphs.
Now, the following proposition states a lower and upper bounds on the injective chromatic index
of more general unicyclic graphs.

\begin{prop}\label{unicyclic_injective_coloring}
Let $G$ be a unicyclic graph and $C_p$ the cycle in $G$. If $p \ge 4$, then  $2\leq \chi_{i}^{'}(G) \le 4$.
\end{prop}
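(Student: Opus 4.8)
The plan is to prove the two inequalities separately. The lower bound is immediate: since $p\ge 4$, the cycle $C_p$ is a subgraph of $G$ containing three consecutive edges, so by Proposition~\ref{subgraph_result} together with Proposition~\ref{particular_cases} (part 2) we obtain $\chi_{i}^{'}(G)\ge \chi_{i}^{'}(C_p)\ge 2$.

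For the upper bound I would construct an explicit injective edge coloring using the four colours $\{1,2,3,4\}$. Write $V(C_p)=\{v_1,\dots,v_p\}$ with cycle edges $e_i=v_iv_{i+1}$ (indices taken modulo $p$). Deleting the cycle edges leaves a forest in which each component contains exactly one cycle vertex $v_i$; I root that component, the tree $T_i$, at $v_i$ (some $T_i$ may be trivial). First I colour the cycle edges with an injective edge coloring using only $\{1,2,3\}$, which exists since $\chi_{i}^{'}(C_p)\le 3$ by Proposition~\ref{particular_cases}. Then I colour each $T_i$ by depth: every edge at distance $d$ from $v_i$ receives the $d$-th term of a period-three pattern $\alpha_i,\beta_i,\gamma_i,\alpha_i,\beta_i,\gamma_i,\dots$, where $\alpha_i,\beta_i,\gamma_i$ are three distinct colours chosen below. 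Exactly as in the proof of Proposition~\ref{ub_for_trees}, any three distinct colours turn this pattern into a valid injective coloring inside a single tree, so all the real work is at the cycle--tree interface.

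The next step is to record the finitely many consecutive triples that mix cycle edges with shallow tree edges. A depth-one edge $v_iw$ lies on the triples $w,v_i,v_{i+1},v_{i+2}$ and $w,v_i,v_{i-1},v_{i-2}$, forcing $\alpha_i\notin\{c(e_{i+1}),c(e_{i-2})\}$; a depth-two edge $wy$ lies on the triples $v_{i-1},v_i,w,y$ and $v_{i+1},v_i,w,y$, forcing $\beta_i\notin\{c(e_{i-1}),c(e_i)\}$; and every edge at depth two or more shares no middle edge with any cycle edge, so it imposes no further cross-constraint. Consequently, once the $\alpha_i$ are fixed I can pick $\beta_i\in\{1,2,3,4\}\setminus\{c(e_{i-1}),c(e_i),\alpha_i\}$ and then $\gamma_i\notin\{\alpha_i,\beta_i\}$, since at most three colours are excluded each time; no constraint links the $\beta_i$ or $\gamma_i$ of distinct trees, because such edges are too far apart to lie on a common triple.

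The hard part, and the only genuine coupling across the whole graph, is the choice of the depth-one colours: when $T_i$ and $T_{i+1}$ are both nontrivial, the path $w,v_i,v_{i+1},w'$ forces $\alpha_i\ne\alpha_{i+1}$. Thus selecting the $\alpha_i$ is precisely a proper list-colouring of a subgraph of $C_p$ in which vertex $v_i$ has the list $L_i=\{4\}\cup(\{1,2,3\}\setminus\{c(e_{i-2}),c(e_{i+1})\})$. Each $L_i$ has at least two elements and contains the spare colour $4$. If some cycle vertex carries no edge of its tree, the constraint graph is a union of paths and is colourable greedily; otherwise it is all of $C_p$, and a colouring from these $2$-element lists is known to exist unless $C_p$ is odd and all the lists coincide. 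But all lists equal to $\{4,a\}$ would force every $e_i$ to avoid colour $a$, i.e. the cycle to be coloured with only two colours, which by Proposition~\ref{particular_cases} happens only when $p\equiv 0~(\mbox{mod}~4)$; this contradicts $p$ being odd. Hence the $\alpha_i$ can always be chosen, and the construction yields $\chi_{i}^{'}(G)\le 4$, completing the proof.
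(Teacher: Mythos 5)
Your proof is correct, and your constraint bookkeeping checks out, but it takes a genuinely different and much heavier route than the paper's. The paper proves the upper bound in three lines: delete an arbitrary cycle vertex $v$; since $G-v$ is a forest it admits an injective edge coloring with three colors (Proposition~\ref{ub_for_trees}), and then all edges incident to $v$ receive one fresh fourth color --- feasible because, $G$ being unicyclic with $p\geq 4$, no triangle passes through $v$, so no consecutive triple has both of its end edges incident to $v$, while every triple avoiding $v$ lies entirely in the forest. You instead assemble the coloring globally: an injective $3$-coloring of $C_p$, period-three depth colorings of the pendant trees, and a list-coloring step on the cycle for the depth-one colors $\alpha_i$. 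I verified that your enumeration of the cycle--tree constraints ($\alpha_i\notin\{c(e_{i+1}),c(e_{i-2})\}$, $\beta_i\notin\{c(e_{i-1}),c(e_i)\}$, $\alpha_i\neq\alpha_{i+1}$ for adjacent nontrivial trees) is complete, and that your exclusion of the odd-cycle obstruction is sound: identical $2$-lists $\{4,a\}$ would force a two-colored cycle, hence $p\equiv 0~(\mbox{mod}~4)$ by Proposition~\ref{particular_cases}, item 2, contradicting $p$ odd. What your approach buys is finer information: it spends the fourth color only where the lists on the cycle demand it, so it localizes exactly where three colors fail and could serve as a starting point for separating unicyclic graphs with $\chi_{i}^{'}(G)=3$ from those needing $4$, which the paper's vertex-deletion argument cannot do. The costs are an appeal to uncited choosability facts --- that even cycles are $2$-choosable and that an odd cycle with $2$-lists is colorable unless all lists coincide --- which you should cite (Erd\H{o}s--Rubin--Taylor) or reprove by the standard greedy argument, plus one slip of wording: ``every edge at depth two or more shares no middle edge with any cycle edge'' should read depth \emph{three} or more, since depth-two edges do occur in such triples (precisely the ones you list when constraining $\beta_i$). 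The lower bound is handled identically in both proofs.
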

\begin{proof}
The left inequality follows directly from Proposition \ref{particular_cases}, item 2.

To prove that $\chi_{i}^{'}(G) \le 4$, let $v$ be an arbitrary vertex of the cycle
$C_p$ and consider $G-v$ (the graph obtained from $G$ deleting $v$ and every edge of $G$
incident to $v$). As $G-v$ is a forest we can properly i-coloring its edges with
three colors, say colors $c_1, c_2,c_3$. Now use a different color, say $c_4$, to color
all edges of $G$ incident to vertex $v$. This is clearly a feasible i-egde coloring of
$G$ using 4 color, showing that the result holds.
\end{proof}

Notice that the upper bound on the injective edge coloring number obtained in
Proposition~\ref{unicyclic_injective_coloring} is attained for the unicyclic
graph $\aleph$ depicted in Figure~\ref{grid graph}. Regarding the lower bound,
it is attained for a graph $G$ if and only if $\bar{G}$ is bipartite.

\section{The injective chromatic index of some mesh graphs}\label{sec_5}

Herein we call mesh graphs the graphs considered in \cite{IVS}. Among these graphs we pay particular
attention to the cartesian products $P_{n} \Box K_2$ and $P_r \Box P_s$ and also to the honeycomb graph.
The \textit{Cartesian product} $G \Box H$ of two graphs $G$ and $H$ is the graph with vertex set equal to the
Cartesian product $V(G) \times V(H)$ and where two vertices $(g_1,h_1)$ and $(g_2,h_2)$ are adjacent in $G \Box H$
if and only if, either $g_1$ is adjacent to $g_2$ in $G$ or $h_1$ is adjacent to $h_2$ in $H$, that is, if $g_1=g_2$
and $h_1$ is adjacent to $h_2$ or $h_1=h_2$ and $g_1$ is adjacent to $g_2$.
\begin{prop}
Let $P_{n}$ be a path of order $n \geq 3 $.  Then $\chi_{i}^{'}(P_{n} \Box K_2) = 3$
\end{prop}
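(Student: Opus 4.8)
The plan is to establish the two bounds $\chi_{i}^{'}(P_{n}\Box K_2)\ge 3$ and $\chi_{i}^{'}(P_{n}\Box K_2)\le 3$ separately. Throughout I label the vertices of $P_{n}\Box K_2$ by $(i,0)$ and $(i,1)$ for $i=1,\dots,n$, and I call the edges $a_i=(i,0)(i{+}1,0)$ and $b_i=(i,1)(i{+}1,1)$ (for $i=1,\dots,n-1$) the \emph{rails}, and $r_i=(i,0)(i,1)$ (for $i=1,\dots,n$) the \emph{rungs}. Since $P_{n}\Box K_2$ is triangle-free, the defining condition of an i-edge coloring takes the convenient local form: for every edge $uv$, the set of colors used on the edges incident with $u$ (other than $uv$) must be disjoint from the set of colors used on the edges incident with $v$ (other than $uv$). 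I will use this reformulation for all verifications.

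For the lower bound I would first note that $P_{3}\Box K_2$ is an induced subgraph of $P_{n}\Box K_2$ for every $n\ge 3$, so by Proposition~\ref{subgraph_result} it suffices to show $\chi_{i}^{'}(P_{3}\Box K_2)\ge 3$. Here I would invoke the earlier characterisation: $P_{3}\Box K_2$ is connected and is not a disjoint union of stars, and the associated graph $\overline{P_{3}\Box K_2}$ contains the odd cycle $r_1\,r_2\,r_3\,b_1\,a_2\,r_1$ (each consecutive pair being the two end-edges of a $3$-edge path, e.g.\ $r_1,a_1,r_2$ for $r_1\sim r_2$ and $a_2,r_2,b_1$ for $a_2\sim b_1$). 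Hence, by Proposition~\ref{charact_graphs}, $\chi_{i}^{'}(P_{3}\Box K_2)\neq 2$, and as it is plainly $\neq 1$, we get $\chi_{i}^{'}(P_{3}\Box K_2)\ge 3$, and therefore $\chi_{i}^{'}(P_{n}\Box K_2)\ge 3$. (Alternatively one checks directly that two colors force $a_1,a_2$ to be monochromatic and $b_1,b_2$ to carry the other color, which then over-constrains the rungs $r_1,r_3$ to a contradiction.)

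For the upper bound I would exhibit an explicit $3$-coloring that is periodic along the ladder. Color the rungs cyclically, $c(r_i)=((i-1)\bmod 3)+1$, so that consecutive rungs always differ, and color the rails with period $6$ by the repeating patterns $c(a_i)=1,3,3,2,2,1,1,3,3,2,2,1,\dots$ and $c(b_i)=2,2,1,1,3,3,2,2,1,1,3,3,\dots$ (equivalently $c(b_i)=c(a_{i+3})$ in the periodic extension). For a given $n$ one simply restricts this coloring of the bi-infinite ladder to its first $n$ columns. Because $P_{n}\Box K_2$ is an induced subgraph of the infinite ladder, every triple of consecutive edges of $P_{n}\Box K_2$ is also such a triple there, so it is enough to verify that the infinite periodic coloring is a valid i-edge coloring, and by periodicity this reduces to finitely many checks. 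Using the local reformulation, it then remains to verify the rule for each middle edge $uv$: when $uv$ is a rung $r_i$ the rule reads $\{c(a_{i-1}),c(a_i)\}\cap\{c(b_{i-1}),c(b_i)\}=\emptyset$; when $uv$ is a rail, say $a_i$, it reads $\{c(a_{i-1}),c(r_i)\}\cap\{c(a_{i+1}),c(r_{i+1})\}=\emptyset$ (and symmetrically for $b_i$). Both are confirmed directly from the period-$3$ and period-$6$ formulas — in particular $c(r_i)\neq c(r_{i+1})$ always holds — over the finitely many residue classes. Combining the two bounds yields $\chi_{i}^{'}(P_{n}\Box K_2)=3$.

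The step I expect to be the main obstacle is the discovery of the explicit $3$-coloring. A naive choice, such as alternating the rungs between two colors, fails: it forces the two central rails of a short ladder to receive the same color while the local rule at the connecting rung forbids exactly this. Finding a pattern (here period $6$ on the rails over a period-$3$ coloring of the rungs) that simultaneously satisfies the rung condition and both rail conditions everywhere is the genuinely creative part; once it is written down, the verification is entirely mechanical.
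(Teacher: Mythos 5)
Your proof is correct, and while it follows the same two-bound skeleton as the paper (explicit $3$-coloring for the upper bound, a small subgraph forcing $\chi_{i}^{'}\geq 3$ for the lower bound), both halves are implemented differently. For the upper bound the paper does not invent a period-$6$ rail pattern: it labels a zigzag of vertices $1,2,3,1,2,3,\dots$ (first upper vertex, second lower vertex, third upper vertex, and so on) and colors every edge by the label of its unique labeled endpoint, which is a simpler period-$3$ scheme and directly exhibits the star-forest partition of Proposition~\ref{coloring_edge_partition}; your coloring is more elaborate but verifies correctly (I checked the rung condition $\{c(a_{i-1}),c(a_i)\}\cap\{c(b_{i-1}),c(b_i)\}=\emptyset$ and both rail conditions over a full period, and all hold), and your reduction to the bi-infinite ladder plus finitely many residue checks is sound. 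For the lower bound the paper observes that $C_6$ is a subgraph of $P_n\Box K_2$ and invokes Proposition~\ref{particular_cases}, item~2, together with Proposition~\ref{subgraph_result} (the paper's text misprints $\chi_{i}^{'}(C_6)=4$, though it only uses $\geq 3$; the correct value is $3$ since $6\not\equiv 0 \pmod 4$), whereas you instead certify $\chi_{i}^{'}(P_3\Box K_2)\geq 3$ via the odd $5$-cycle $r_1r_2r_3b_1a_2$ in $\overline{P_3\Box K_2}$ and Proposition~\ref{charact_graphs}; your adjacency witnesses are all valid, so this works and has the side benefit of avoiding the paper's misprint, at the cost of invoking the heavier $\bar{G}$ machinery where the $C_6$ observation suffices.
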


\begin{proof}
Mark all the vertices in $P_{n} \Box K_2$ from the left to right as follows: mark the first upper vertex in the
ladder by $1$, the second lower vertex by $2$, the third upper vertex by $3$ and so on, as it is in
Figure~\ref{cartesian_product}. Now color the edges with one end vertex labeled $1$ by the color $c_{1}$,
the edges with one end vertex labeled $2$ by the color $c_2$, the edges with one end vertex labeled $3$
by the color $c_3$ and so on. This coloring yields an injective edge coloring of $P_{n} \Box K_2$. Therefore,
$\chi_{i}^{'}(P_{n} \Box K_2) \le 3$. On the other hand, it is easy to find $C_6$ as a subgraph of $P_{n} \Box K_2$
and, from Proposition~\ref{particular_cases}-\ref{case_2}, $\chi_{i}^{'}(C_6)=4$. Then,  applying Proposition~\ref{subgraph_result},
it follows that $\chi_{i}^{'}(P_{n} \Box K_2) \ge 3$.
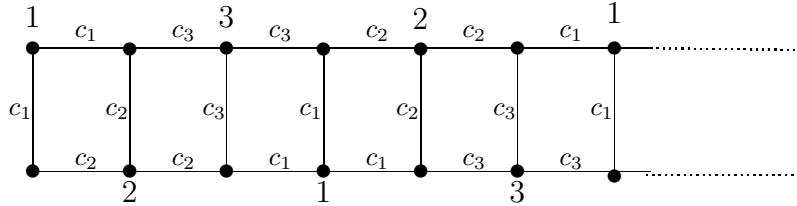
\begin{figure}[th]
\begin{center}
\unitlength .85mm 
\linethickness{0.2pt}
\ifx\plotpoint\undefined\newsavebox{\plotpoint}\fi 
\begin{picture}(100.25,30.5)(0,05)
\put(3.5,26.561){\line(1,0){95.75}}
\put(3.75,7.311){\line(1,0){95.25}}
\put(3.5,26.561){\line(0,-1){19.5}}
\put(18.5,26.561){\line(0,-1){19.5}}
\put(33.5,26.561){\line(0,-1){19.5}}
\put(48.5,26.561){\line(0,-1){18.75}}
\put(63.5,26.561){\line(0,-1){19}}
\put(78.5,26.561){\line(0,-1){19.25}}
\put(93.5,26.561){\line(0,-1){20}}
\put(3.5,7.311){\circle*{2.121}}
\put(3.5,26.561){\circle*{2.121}}
\put(18.5,26.31){\circle*{2.121}}
\put(18.5,7.311){\circle*{2.121}}
\put(33.5,26.561){\circle*{2.121}}
\put(33.5,7.311){\circle*{2.121}}
\put(48.5,26.311){\circle*{2.121}}
\put(48.5,7.311){\circle*{2.121}}
\put(63.5,26.31){\circle*{2.121}}
\put(63.5,7.311){\circle*{2.121}}
\put(78.5,26.561){\circle*{2.121}}
\put(78.5,7.311){\circle*{2.121}}
\put(93.5,26.561){\circle*{2.121}}
\put(93.5,6.561){\circle*{2.121}}

\multiput(98.43,6.68)(.97826,0){24}{{\rule{.4pt}{.4pt}}}
\multiput(98.18,26.43)(.95833,-.01042){25}{{\rule{.4pt}{.4pt}}}
\put(2.25,29.75){$1$}
\put(17.25,2.5){$2$}
\put(32.25,29.75){$3$}
\put(47.25,2.5){$1$}
\put(62.25,29.5){$2$}
\put(77.25,2.5){$3$}
\put(92.25,30.5){$1$}
\put(10,28.25){\footnotesize $c_{1}$}
\put(10,8.25){\footnotesize $c_{2}$}
\put(25,28.25){\footnotesize $c_{3}$}
\put(40,28.25){\footnotesize $c_{3}$}
\put(25,8.25){\footnotesize $c_{2}$}
\put(40,8.25){\footnotesize $c_{1}$}
\put(55,8.25){\footnotesize $c_{1}$}
\put(55,28.25){\footnotesize $c_{2}$}
\put(70,28.25){\footnotesize $c_{2}$}
\put(70,8.25){\footnotesize $c_{3}$}
\put(85,8.25){\footnotesize $c_{3}$}
\put(85,28.25){\footnotesize $c_{1}$}
\put(-0.25,16){\footnotesize $c_{1}$}
\put(14.75,16){\footnotesize $c_{2}$}
\put(29.75,16){\footnotesize $c_{3}$}
\put(44.75,16){\footnotesize $c_{1}$}
\put(59.75,16){\footnotesize $c_{2}$}
\put(74.75,16){\footnotesize $c_{3}$}
\put(89.75,16){\footnotesize $c_{1}$}
\end{picture}
\end{center}
\caption{The Cartesian product $P_{n} \Box K_{2}$ with $\chi_{i}^{'}{(G)}=3$.}\label{cartesian_product}
\end{figure}
\end{proof}

Before the next proposition it is worth to recall that a \textit{two dimensional grid} graph is the graph
obtained by the cartesian product $P_{r} \Box P_{s}$, where $r$ and $s$ are integers.

\begin{prop}
If $r, s \ge 4 $, then $\chi_{i}^{'}(P_{r} \Box P_{s})=4$.
\end{prop}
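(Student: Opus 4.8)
The plan is to prove the two inequalities $\chi_{i}^{'}(P_{r} \Box P_{s}) \le 4$ and $\chi_{i}^{'}(P_{r} \Box P_{s}) \ge 4$ separately: the first by an explicit construction valid for all $r,s$, and the second by reducing to the smallest case $P_4 \Box P_4$ and analysing it.

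For the upper bound I would exhibit a single $4$-colouring that works for every $r,s$ by making it periodic. Index the vertices by $(i,j)$ with $1\le i\le r$, $1\le j\le s$, write $h_{i,j}$ for the horizontal edge $(i,j)(i+1,j)$ and $v_{i,j}$ for the vertical edge $(i,j)(i,j+1)$, and assign a colour to each edge by a fixed rule depending only on its orientation and on $(i \bmod 4,\, j \bmod 4)$. To verify validity I would use Proposition~\ref{coloring_edge_partition}: it suffices to check that the end-vertices of each of the four colour classes induce a disjoint union of stars, equivalently (the grid being triangle-free) that no two edges of the same class are the two end-edges of a path on four vertices. One cannot simply split the two orientations into two separate $2$-colour palettes, because already the horizontal edges contain pairwise-conflicting triples such as $\{h_{i,j},\,h_{i+2,j},\,h_{i+1,j+1}\}$, so the horizontal edges alone require three colours and the periodic pattern must genuinely interleave the palette across both orientations. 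Producing one concrete valid tile and checking the finitely many local conflict types is routine but must be done carefully.

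For the lower bound the key remark is monotonicity: by Proposition~\ref{subgraph_result}, since $P_4\Box P_4$ is a subgraph of $P_r\Box P_s$ for all $r,s\ge 4$, it is enough to prove $\chi_{i}^{'}(P_4\Box P_4)\ge 4$; equivalently, by Lemma~\ref{lemmaGbarG}, that the conflict graph $\bar{G}$ of $P_4\Box P_4$ is not $3$-colourable. Here lies the main obstacle, and it is a genuine one. No cycle forces four colours, since by Proposition~\ref{particular_cases} every $C_p$ has $\chi_{i}^{'}(C_p)\le 3$; and in fact $\bar{G}$ is $K_4$-free: four grid edges that pairwise conflict would be pairwise disjoint yet pairwise joined by a connecting edge, which confines them to a $3\times 3$ window in which a short check rules out four such edges. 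Consequently there is no clique certificate, and the value $4$ must be extracted from a subtler obstruction.

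To overcome this I would exhibit inside $\bar{G}$ a small $K_4$-free but $4$-chromatic subgraph — the natural candidate being an odd wheel, that is one hub edge conflicting with every edge of a $5$-cycle of cyclically conflicting edges — or, failing a clean such certificate, argue non-$3$-colourability directly: fix a forced conflicting triple near the centre of the grid, colour it $1,2,3$ without loss of generality, and propagate the constraints through the overlapping high-degree local conflicts until some edge is forced to avoid all three colours. This propagation is a finite case analysis confined to the central part of $P_4\Box P_4$, and combined with the upper-bound construction it yields $\chi_{i}^{'}(P_r\Box P_s)=4$. I expect this finite verification of non-$3$-colourability to be the most laborious and error-prone step of the argument.
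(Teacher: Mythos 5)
Your overall skeleton is sound and half of it coincides with the paper's: for the upper bound the paper does exactly what you describe, exhibiting a periodic pattern (colors assigned diagonally, with red/green alternating along the diagonals through the corner and blue/yellow on the parallel diagonals, Figure~\ref{grid graph}) in which each color class is a union of stars as required by Proposition~\ref{coloring_edge_partition}, and with the palette genuinely shared between horizontal and vertical edges, as your triangle $\{h_{i,j},h_{i+2,j},h_{i+1,j+1}\}$ correctly shows it must be. For the lower bound you and the paper both invoke Proposition~\ref{subgraph_result}, but with different certificates: you reduce to the whole of $P_4\Box P_4$ and propose to show its conflict graph (in the sense of Lemma~\ref{lemmaGbarG}) is not $3$-colorable by an odd-wheel certificate or a direct propagation argument, whereas the paper isolates a much smaller witness, the $16$-edge unicyclic graph $\aleph$ of Figure~\ref{grid graph} (a $C_4$ with pendant trees, embeddable in $P_4\Box P_4$), and uses $\chi_{i}^{'}(\aleph)=4$, which simultaneously shows that the upper bound of Proposition~\ref{unicyclic_injective_coloring} is attained. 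The paper's choice buys a more economical finite check and a reusable example; your route buys a structural insight the paper leaves implicit, namely that the conflict graph of the grid is $K_4$-free, so the clique bound of Proposition~\ref{PropEIP1} cannot certify the value $4$ and a non-clique obstruction is unavoidable --- this correctly explains why some ad hoc finite verification (yours or the paper's) is needed. Do note, however, that your write-up defers both finite verifications (the concrete $4\times 4$ tile and the non-$3$-colorability analysis), which is where all the actual content sits; to be fair, the paper is similarly terse, displaying the coloring in a figure and asserting $\chi_{i}^{'}(\aleph)=4$ without a written case analysis, so your proposal matches the paper's level of rigor once those two checks are carried out.
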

\begin{proof}
We start to choose a color, say red, for the edges having the upper left corner vertex $v$ of $G=P_{r}\Box P_{s}$
as end vertex. Then, we color the edges having as end vertex the vertices which form a diagonal
(starting at $v$) of the grid $G$ alternating between red and another color, say green. The parallel diagonals are
colored in the same way, using two different colors, say blue and yellow (see
Figure~\ref{grid graph}). It is easy to check that this coloring produces an injective edge coloring of $G$ and
therefore, $\chi_{i}^{'}(G) \le 4$. Since the graph $\aleph$ depicted in Figure~\ref{grid graph}
is a subgraph of $G$ such that $\chi_{i}^{'}(\aleph)=4$, applying Proposition~\ref{subgraph_result}, it follows
that $4 = \chi_{i}^{'}(\aleph) \le \chi_{i}^{'}(G) \le 4$.
\end{proof}
\begin{center}
\begin{figure}[th]

\centering
\begin{tikzpicture}[every node/.append style={circle,  fill, inner sep=1.5pt, minimum size=1.5pt},scale=1]
\draw [thick,color=blue,step=.5cm,] (1,10) -- (1,10.5);\draw [thick,color=red,step=.5cm,]  (1,10.5) -- (1,11);
\draw [thick,color=red,step=.5cm,]  (1,11) --(1,11.5);\draw [thick,color=blue,step=.5cm,]  (1,11.5) -- (1,12);
\draw [thick,color=blue,step=.5cm,]  (1,12) -- (1,12.5);\draw [thick,color=red,step=.5cm,]  (1,12.5) -- (1,13);
\draw [thick,color=blue,step=.5cm,]  (1,10) -- (1.5,10);\draw [thick,color=green,step=.5cm,]  (1,10.5) -- (1.5,10.5);
\draw [thick,color=green,step=.5cm,]  (1.5,10) -- (1.5,10.5);\draw [thick,color=green,step=.5cm,]  (1.5,10.5) -- (1.5,11);
\draw [thick,color=red,step=.5cm,]  (1,11) -- (1.5,11);\draw [thick,color=yellow,step=.5cm,]  (1.5,11.5) -- (1.5,11);
\draw [thick,color=yellow,step=.5cm,]  (1.5,11.5) -- (1,11.5); \draw [thick,color=yellow,step=.5cm,]  (1.5,11.5)-- (1.5,12);
\draw [thick,color=blue,step=.5cm,]  (1,12)-- (1.5,12) ;\draw [thick,color=green,step=.5cm,] (1.5,12)--(1.5,13);
\draw [thick,color=green,step=.5cm,] (1.5,12.5)-- (1,12.5);\draw [thick,color=red,step=.5cm,] (1,13)--(1.5,13);
\draw [thick,color=red,step=.5cm,] (1.5,10)-- (2,10);\draw [thick,color=red,step=.5cm,] (2,10)--(2,10.5);
\draw [thick,color=red,step=.5cm,] (2,10)-- (2.5,10);\draw [thick,color=green,step=.5cm,] (2,10.5)-- (1.5,10.5);
\draw [thick,color=blue,step=.5cm,](2,11)--(2,10.5);\draw [thick,color=blue,step=.5cm,](2,11)--(1.5,11);
\draw [thick,color=blue,step=.5cm,](2,11)--(2,11.5);\draw [thick,color=blue,step=.5cm,](2,11)--(2.5,11);
\draw [thick,color=yellow,step=.5cm,] (1.5,11.5)--(2,11.5);\draw [thick,color=red,step=.5cm,] (2,12)--(2,11.5);
\draw [thick,color=red,step=.5cm,] (2,12)--(1.5,12);\draw [thick,color=red,step=.5cm,] (2,12)--(2,12.5);
\draw [thick,color=red,step=.5cm,] (2,12)--(2.5,12); \draw [thick,color=green,step=.5cm,] (1.5,12.5)--(2,12.5);
\draw [thick,color=blue,step=.5cm,] (2,13)--(1.5,13); \draw [thick,color=blue,step=.5cm,] (2,13)-- (2,12.5);
\draw [thick,color=blue,step=.5cm,] (2,13)-- (2.5,13); \draw [thick,color=yellow,step=.5cm,](2.5,10.5)--(2.5,10);
\draw [thick,color=yellow,step=.5cm,](2.5,10.5)--(2,10.5);\draw [thick,color=yellow,step=.5cm,](2.5,10.5)--(2.5,11);
\draw [thick,color=yellow,step=.5cm,](2.5,10.5)--(3,10.5);\draw [thick,color=green,step=.5cm,](2.5,11.5)--(2.5,11);
\draw [thick,color=green,step=.5cm,](2.5,11.5)--(2,11.5);\draw [thick,color=green,step=.5cm,](2.5,11.5)--(2.5,12);
\draw [thick,color=green,step=.5cm,](2.5,11.5)--(3,11.5);\draw [thick,color=yellow,step=.5cm,](2.5,12.5)--(2,12.5);
\draw [thick,color=yellow,step=.5cm,](2.5,12.5)--(2.5,12);\draw [thick,color=yellow,step=.5cm,](2.5,12.5)--(3,12.5);
\draw [thick,color=yellow,step=.5cm,](2.5,12.5)--(2.5,13);\draw [thick,color=blue,step=.5cm,](3,10)--(2.5,10);
\draw [thick,color=blue,step=.5cm,](3,10)--(3,10.5);\draw [thick,color=blue,step=.5cm,](3,10)--(3.5,10);
\draw [thick,color=red,step=.5cm,](3,11)--(3,10.5);\draw [thick,color=red,step=.5cm,](3,11)--(2.5,11);
\draw [thick,color=red,step=.5cm,](3,11)--(3,11.5);\draw [thick,color=red,step=.5cm,](3,11)--(3.5,11);
\draw [thick,color=blue,step=.5cm,](3,12)--(3,11.5);\draw [thick,color=blue,step=.5cm,](3,12)--(2.5,12);
\draw [thick,color=blue,step=.5cm,](3,12)--(3,12.5);\draw [thick,color=blue,step=.5cm,](3,12)--(3.5,12);
\draw [thick,color=red,step=.5cm,](3,13)--(3.5,13);\draw [thick,color=red,step=.5cm,](3,13)--(3,12.5);
\draw [thick,color=red,step=.5cm,](3,13)--(2.5,13);
\draw [thick,color=green,step=.5cm,](3.5,10.5)--(3.5,10);
\draw [thick,color=green,step=.5cm,](3.5,10.5)--(3,10.5);
\draw [thick,color=green,step=.5cm,](3.5,10.5)--(3.5,11);
\draw [thick,color=yellow,step=.5cm,](3.5,11.5)--(3.5,11);
\draw [thick,color=yellow,step=.5cm,](3.5,11.5)--(3,11.5);
\draw [thick,color=yellow,step=.5cm,](3.5,11.5)--(3.5,12);
\draw [thick,color=green,step=.5cm,](3.5,12.5)-- (3.5,13);
\draw [thick,color=green,step=.5cm,](3.5,12.5)-- (3,12.5);
\draw [thick,color=green,step=.5cm,](3.5,12.5)--(3.5,12);
\textcolor[rgb]{0.00,0.00,1.00}{ \node (1) at (1,10)  {};}
  \node (2) at (1,10.5)  {};
  \textcolor[rgb]{1.00,0.00,0.00}{\node (3) at (1,11)  {};}
  \node (4) at (1,11.5)  {};
  \textcolor[rgb]{0.00,0.00,1.00}{\node (5) at (1,12)  {};}
  \node (6) at (1,12.5)  {};
 \textcolor[rgb]{1.00,0.00,0.00}{ \node (7) at (1,13)  {};}

\node (8) at (1.5,10)  {};
\textcolor[rgb]{0.00,1.00,0.00}{\node (9) at (1.5,10.5)  {};}
\node (10) at (1.5,11)  {};
\textcolor[rgb]{1.00,1.00,0.00}{ \node (11) at (1.5,11.5)  {};}
\node (12) at (1.5,12)  {};
\textcolor[rgb]{0.00,1.00,0.00}{\node (13) at (1.5,12.5) {};}
\node (14) at (1.5,13) {};

\textcolor[rgb]{1.00,0.00,0.00}{\node (15) at (2,10)  {};}
\node (16) at (2,10.5)  {};
\textcolor[rgb]{0.00,0.00,1.00}{\node (17) at (2,11)  {};}
\node (18) at (2,11.5)  {};
\textcolor[rgb]{1.00,0.00,0.00}{\node (19) at (2,12)  {};}
\node (20) at (2,12.5)  {};
\textcolor[rgb]{0.00,0.00,1.00}{\node (21) at (2,13)  {}; }

\node (22) at (2.5,10)  {};
\textcolor[rgb]{1.00,1.00,0.00}{\node (23) at (2.5,10.5)  {};}
\node (24) at (2.5,11)  {};
\textcolor[rgb]{0.00,1.00,0.00}{\node (25) at (2.5,11.5)  {};}
\node (26) at (2.5,12)  {};
\textcolor[rgb]{1.00,1.00,0.00}{\node (27) at (2.5,12.5)  {}; }
\node (28) at (2.5,13)  {};

\textcolor[rgb]{0.00,0.00,1.00}{\node (29) at (3,10)  {};}
\node (30) at (3,10.5)  {};
\textcolor[rgb]{1.00,0.00,0.00}{\node (31) at (3,11)  {};}
\node (32) at (3,11.5)  {};
\textcolor[rgb]{0.00,0.00,1.00}{\node (33) at (3,12)  {};}
\node (34) at (3,12.5)  {};
\textcolor[rgb]{1.00,0.00,0.00}{\node (35) at (3,13)  {};}
\node (36) at (3.5,10)  {};
\textcolor[rgb]{0.00,1.00,0.00}{\node (37) at (3.5,10.5)  {};}
\node (38) at (3.5,11)  {};
\textcolor[rgb]{1.00,1.00,0.00}{\node (39) at (3.5,11.5)  {};}
\node (40) at (3.5,12)  {};
\textcolor[rgb]{0.00,1.00,0.00}{\node (41) at (3.5,12.5)  {};}
\node (42) at (3.5,13)  {};
\draw [dashed,step=.5cm,](4.5,12)--(3.5,12);
\draw [dashed,step=.5cm,](4.5,12.5)--(3.5,12.5);
\draw [dashed,step=.5cm,](4.5,13)--(3.5,13);
\draw [dashed,step=.5cm,](4.5,11.5)--(3.5,11.5);
\draw [dashed,step=.5cm,](4.5,11)--(3.5,11);
\draw [dashed,step=.5cm,](4.5,10.5)--(3.5,10.5);
\draw [dashed,step=.5cm,](4.5,10)--(3.5,10);
\draw [dashed,step=.5cm,] (1,10)--(1,9);
\draw [dashed,step=.5cm,](1.5,10)--(1.5,9);
\draw [dashed,step=.5cm,](2,10)--(2,9);
\draw [dashed,step=.5cm,](2.5,10)--(2.5,9);
\draw [dashed,step=.5cm,](3,10)--(3,9);
\draw [dashed,step=.5cm,](3.5,10)--(3.5,9);
\node (1) at (8,10.5)  {};
\node (2) at (8,11.25)  {};
\node (3) at (8,12)  {};
\node (4) at (8,12.75)  {};
\node (5) at (8.75,10.5)  {};
\node (6) at (8.75,11.25)  {};
\node (7) at (8.75,12) {};
\node (8) at (8.75,12.75) {};
\node (9) at (9.5,10.5)  {};
\node (10) at (9.5,11.25)  {};
\node (11) at (9.5,12)  {};
\node (12) at (9.5,12.75)  {};
\node (13) at (10.25,10.5)  {};
\node (14) at (10.25,11.25)  {};
\node (15) at (10.25,12)  {};
\node (16) at (10.25,12.75)  {};
\foreach \from/\to in {1/5,5/6,6/2,6/7,7/3,7/8,8/4,7/11,6/10,9/10,10/11,11/12,12/16,11/15,10/14,9/13}
\draw (\from) -- (\to);
\node[circle,fill=gray!00] at (9.1,9.5){$\aleph$};
\node[rectangle,fill=gray!00] at (2.5,8.6){$P_{r}\Box P_{s}$};

\end{tikzpicture}
\caption{Injective edge coloring of $G = P_{r} \Box P_{s}$ which has $\chi_{i}^{'}({G})=4$ and the unicyclic graph $\aleph$, where $\chi_{i}^{'}(\aleph)=4$.}\label{grid graph}
\end{figure}
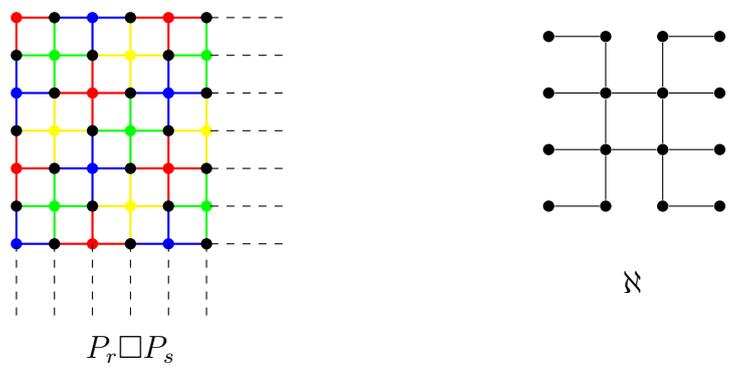
\end{center}

\begin{center}
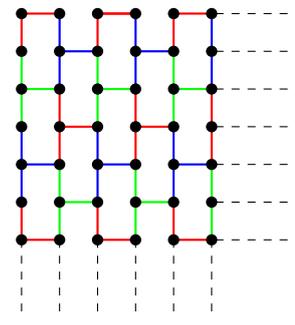
\begin{figure}[th]
\centering
\begin{tikzpicture}[every node/.append style={circle,  fill, inner sep=1.5pt, minimum size=1.5pt}]

\draw [thick,color= red, step=.5cm,] (1,10) -- (1,10.5);\draw [thick,color= blue, step=.5cm,] (1,10.5)--(1,11);
\draw [thick,color= blue, step=.5cm,] (1,11)--(1,11.5)  {};\draw [thick,color= green, step=.5cm,](1,11.5)--(1,12) ;
\draw [thick,color= green, step=.5cm,](1,12)--(1,12.5);\draw [thick,color= red, step=.5cm,](1,12.5)--(1,13);
\draw [thick,color= red, step=.5cm,](1,10)--(1.5,10);\draw [thick,color= green, step=.5cm,](1.5,10)--(1.5,10.5);
\draw [thick,color= green, step=.5cm,](1.5,10.5)--(1.5,11);\draw [thick,color= green, step=.5cm,](1.5,10.5)--(2,10.5);
\draw [thick,color= blue, step=.5cm,] (1,11)--(1.5,11);\draw [thick,color= red, step=.5cm,](1.5,11)--(1.5,11.5);
\draw [thick,color= red, step=.5cm,](1.5,11.5)--(1.5,12);\draw [thick,color= green, step=.5cm,](1.5,12)--(1,12);
\draw [thick,color= blue, step=.5cm,](1.5,12)--(1.5,12.5);\draw [thick,color= blue, step=.5cm,](1.5,12.5)--(1.5,13);
\draw [thick,color= red, step=.5cm,](1.5,13)--(1,13);\draw [thick,color= red, step=.5cm,](2,10)--(2,10.5);
\draw [thick,color= blue, step=.5cm,](2,10.5)--(2,11);\draw [thick,color= blue, step=.5cm,](2,11)--(2,11.5);
\draw [thick,color= blue, step=.5cm,](2,11)--(2.5,11);
\draw [thick,color= red, step=.5cm,](2,11.5)--(1.5,11.5);
\draw [thick,color= green, step=.5cm,](2,11.5)--(2,12);
\draw [thick,color= green, step=.5cm,](2,12)--(2,12.5);
\draw [thick,color= green, step=.5cm,](2,12)--(2.5,12);
\draw [thick,color= red, step=.5cm,](2,12.5)--(2,13);
\draw [thick,color= blue, step=.5cm,](1.5,12.5)--(2,12.5);
\draw [thick,color= red, step=.5cm,](2,13)--(2.5,13);
\draw [thick,color= red, step=.5cm,](2,10)--(2.5,10);
\draw [thick,color= green,step=.5cm,](2.5,10)--(2.5,10.5);
\node (1) at (1,10)  {};
\node (2) at (1,10.5)  {};
\node (3) at (1,11)  {};
\node (4) at (1,11.5)  {};
\node (5) at (1,12)  {};
\node (6) at (1,12.5)  {};
\node (7) at (1,13)  {};
\node (8) at (1.5,10)  {};
\node (9) at (1.5,10.5)  {};
\node (10) at (1.5,11)  {};
\node (11) at (1.5,11.5)  {};
\node (12) at (1.5,12)  {};
\node (13) at (1.5,12.5) {};
\node (14) at (1.5,13) {};
\draw [thick,color= green,step=.5cm,](2.5,10.5)--(2.5,11);\draw [thick,color= green,step=.5cm,](2.5,10.5)--(3,10.5);
\draw [thick,color= red, step=.5cm,](2.5,11)--(2.5,11.5);\draw [thick,color= red, step=.5cm,](2.5,11.5)--(3,11.5);
\draw [thick,color= red, step=.5cm,](2.5,11.5)--(2.5,12);\draw [thick,color= blue, step=.5cm,](2.5,12)--(2.5,12.5);
\draw [thick,color= blue, step=.5cm,](2.5,12.5)--(3,12.5);\draw [thick,color= blue, step=.5cm,](2.5,12.5)--(2.5,13);
\draw [thick,color= red, step=.5cm,](2.5,13)--(2,13);\draw [thick,color= red, step=.5cm,](3,10)--(3.5,10);
\draw [thick,color= red, step=.5cm,](3,10)--(3,10.5);
\draw [thick,color= blue, step=.5cm,](3,10.5)--(3,11);
\draw [thick,color= blue, step=.5cm,](3,11)--(3,11.5);
\draw [thick,color= blue, step=.5cm,](3,11)--(3.5,11);
\draw [thick,color= green,step=.5cm,](3,11.5)--(3,12);
\draw [thick,color= green,step=.5cm,](3,12)--(3.5,12);
\draw [thick,color= green,step=.5cm,](3,12)--(3,12.5);
\node (15) at (2,10)  {};
\node (16) at (2,10.5) {};
\node (17) at (2,11)  {};
\node (18) at (2,11.5)  {};
\node (19) at (2,12)  {};
\node (20) at (2,12.5)  {};
\node (21) at (2,13)  {};
\node (22) at (2.5,10)  {};
\node (23) at (2.5,10.5)  {};
\node (24) at (2.5,11)  {};
\node (25) at (2.5,11.5)  {};
\node (26) at (2.5,12)  {};
\node (27) at (2.5,12.5)  {};
\node (28) at (2.5,13)  {};
\draw [thick,color= red, step=.5cm,](3,12.5)--(3,13);
\draw [thick,color= red, step=.5cm,](3,13)--(3.5,13);
\draw [thick,color= green,step=.5cm,](3.5,10)--(3.5,10.5);
\draw [thick,color= green,step=.5cm,](3.5,10.5)--(3.5,11);
\draw [thick,color= red, step=.5cm,](3.5,11)--(3.5,11.5);
\draw [thick,color= red, step=.5cm,](3.5,11.5)--(3.5,12);
\draw [thick,color= blue, step=.5cm,](3.5,12)--(3.5,12.5);
\draw [thick,color= blue, step=.5cm,](3.5,12.5)--(3.5,13);

\node (29) at (3,10)  {};
\node (30) at (3,10.5)  {};
\node (31) at (3,11)  {};
\node (32) at (3,11.5)  {};
\node (33) at (3,12)  {};
\node (34) at (3,12.5)  {};
\node (35) at (3,13)  {};
\node (36) at (3.5,10)  {};
\node (37) at (3.5,10.5)  {};
\node (38) at (3.5,11)  {};
\node (39) at (3.5,11.5)  {};
\node (40) at (3.5,12)  {};
\node (41) at (3.5,12.5)  {};
\node (42) at (3.5,13)  {};
\draw [dashed,step=.5cm,](4.5,12)--(3.5,12);
\draw [dashed,step=.5cm,](4.5,12.5)--(3.5,12.5);
\draw [dashed,step=.5cm,](4.5,13)--(3.5,13);
\draw [dashed,step=.5cm,](4.5,11.5)--(3.5,11.5);
\draw [dashed,step=.5cm,](4.5,11)--(3.5,11);
\draw [dashed,step=.5cm,](4.5,10.5)--(3.5,10.5);
\draw [dashed,step=.5cm,](4.5,10)--(3.5,10);
\draw [dashed,step=.5cm,] (1,10)--(1,9);
\draw [dashed,step=.5cm,](1.5,10)--(1.5,9);
\draw [dashed,step=.5cm,](2,10)--(2,9);
\draw [dashed,step=.5cm,](2.5,10)--(2.5,9);
\draw [dashed,step=.5cm,](3,10)--(3,9);
\draw [dashed,step=.5cm,](3.5,10)--(3.5,9);
\end{tikzpicture}
\caption{Injective edge coloring of a honey comb graph using three colors.}\label{honeycomb graph}
\end{figure}\end{center}
Honeycomb graphs are hexagonal tessellations which appear in the literature as models of many applications.
Among several examples presented in \cite{IVS} we may emphasize the applications to cellular phone station
placement, representation of benzenoid hydrocarbons, computer graphics and image processing, etc.
\begin{prop}
If $G$ is a honeycomb graph, then $\chi_{i}^{'}(G)=3$.
\end{prop}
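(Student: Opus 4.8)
The plan is to establish the two inequalities $\chi_{i}^{'}(G)\ge 3$ and $\chi_{i}^{'}(G)\le 3$ separately, exactly in the spirit of the preceding mesh-graph results.

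For the lower bound I would exploit the fact that every honeycomb graph contains a hexagonal face, i.e.\ a cycle $C_6$, as a subgraph. Since $6\ge 4$ and $6\not\equiv 0\pmod{4}$, Corollary~\ref{cor_10}, item~\ref{cor_10.3}, immediately gives $\chi_{i}^{'}(G)\ge 3$; alternatively one combines Proposition~\ref{particular_cases}, item~\ref{case_2} (which yields $\chi_{i}^{'}(C_6)=3$) with the subgraph monotonicity of Proposition~\ref{subgraph_result}. This half is therefore essentially immediate.

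For the upper bound the goal is to exhibit an explicit injective edge coloring with three colors, namely the periodic pattern displayed in Figure~\ref{honeycomb graph}. I would describe the coloring on a fundamental domain of the hexagonal lattice and then extend it by the translational symmetry of the tiling. Equivalently, by Proposition~\ref{coloring_edge_partition} it suffices to partition $E(G)$ into three classes, each of whose induced subgraphs has only star components (here stars $K_{1,s}$ with $s\le 3$, since the honeycomb is $3$-regular in its interior). Because the pattern repeats, the injective condition, that for every path $e_1,e_2,e_3$ of three consecutive edges one has $c(e_1)\ne c(e_3)$, needs to be verified only on the finitely many representative triples contained in one period together with its immediate neighbors; a direct local inspection then confirms validity everywhere in the interior.

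The main obstacle I anticipate lies in the bookkeeping of the upper bound: since adjacent hexagons share edges, the per-hexagon colorings must agree on the shared edges, so one cannot simply color each hexagon independently with the cyclic pattern $1,2,3,1,2,3$ that works for a single $C_6$. The real content is to show that a globally consistent three-coloring with this local behaviour exists and to check the finitely many boundary configurations (the degree-$2$ vertices on the outer face), where fewer constraints make the verification only easier. Once the pattern of Figure~\ref{honeycomb graph} is checked on one period and shown to tile consistently, the bound $\chi_{i}^{'}(G)\le 3$ follows, and together with the lower bound we conclude $\chi_{i}^{'}(G)=3$.
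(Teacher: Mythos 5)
Your proposal is correct and takes essentially the same approach as the paper: the lower bound from the hexagonal face $C_6$ together with Proposition~\ref{particular_cases} (item~\ref{case_2}) and the subgraph monotonicity of Proposition~\ref{subgraph_result}, and the upper bound from the explicit periodic three-coloring displayed in Figure~\ref{honeycomb graph}. The only difference is one of detail: you spell out the verification on a fundamental domain and the consistency across shared edges of adjacent hexagons, which the paper compresses into ``as can be easily checked.''
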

\begin{proof}
Since the honeycomb graph $G$ has a hexagonal tessellation, then $C_{6}$ is a subgraph $G$ and, by
Proposition~\ref{particular_cases}-\ref{case_2}, $\chi_{i}^{'} (C_{6})=3$. Therefore, considering the coloring
of the honeycomb graph $G$ presented in Figure~\ref{honeycomb graph} which (as can be easily checked)
is an injective edge coloring, it follows that  $3 = \chi_{i}^{'} (C_{6}) \le \chi_{i}^{'}(G) \le 3$.
\end{proof}

\section{Computational complexity of injective edge coloring}\label{sec_6}

The main ingredient to establish the complexity of i-egde coloring is the graph $B$ of Figure \ref{figa}, which obviously
can not be properly edge i-colored with less than 3 colors.

It can be easily verified that
\begin{lemma}\label{complexity}
In every 3 proper i-egde coloring of the graph $B$ of Figure \ref{figa} all edges incident with vertex
$b$ (and edge $e$) receive the same color.
\end{lemma}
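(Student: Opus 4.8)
The plan is to translate the defining inequality of an injective edge coloring — namely $c(e_1)\neq c(e_3)$ whenever $e_1,e_2,e_3$ are consecutive — into a system of ``$\neq$'' constraints on the edges of $B$, and then show that under a budget of only three colors this system leaves no freedom at $b$. First I would fix notation by reading the relevant labels off Figure~\ref{figa}: let $f_1,\dots,f_t$ denote the edges incident with $b$, let $e$ be the distinguished edge, and record, for each ordered pair of edges that lie at distance two (i.e.\ that are joined by a common middle edge), the disequality it induces. The whole argument rests on the contrast between the two ways three colors can interact: a triangle on edges $g_1,g_2,g_3$ forces the three to be pairwise distinct (each pair is the outer pair of a cyclic triple), whereas a long path forces only the two endpoints of each length-three window apart, which — once the palette is limited to three colors — can be used to \emph{pin down} a color rather than merely forbid one.

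The key step is a pinning argument. For each edge $f_i$ at $b$ I would exhibit, inside $B$, two edges $p_i,q_i$ that are (a) consecutive with $f_i$ through a common middle edge, so that $c(f_i)$ must avoid both $c(p_i)$ and $c(q_i)$, and (b) already forced to carry two \emph{distinct} colors by a local triangle (or other forced subconfiguration) elsewhere in $B$. Since only three colors are available, $c(f_i)$ is then squeezed onto the unique remaining color. The crux is to check that this remaining color is the same for every index $i$, i.e.\ that the forbidden pairs $\{c(p_i),c(q_i)\}$ all coincide; this I expect to follow from the symmetry of the gadget about $b$, so that the two forbidden colors at each $f_i$ are dictated by the same pair of triangle-edges. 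The identical analysis applied to the middle edge $e$ — which by construction shares its pair of forbidden colors with the $f_i$ — then shows that $c(e)$ equals that common value as well.

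The main obstacle I anticipate is bookkeeping rather than conceptual: one must verify that the disequalities genuinely leave \emph{exactly one} admissible color at each $f_i$, and do not merely narrow the choice to two, which requires confirming that the two edges feeding each $f_i$ really receive distinct colors in \emph{every} $3$ proper i-edge coloring, not just in some. I would handle this by a short contradiction: assume two edges at $b$ receive different colors, propagate the ``$\neq$'' constraints through the gadget, and arrive at a vertex or triangle whose incident edges would then require a fourth color, contradicting that a $3$ proper i-edge coloring was given. Because $B$ cannot be i-colored with fewer than three colors, all three colors are genuinely in play throughout the gadget, and it is exactly this fact that makes the pinning tight.
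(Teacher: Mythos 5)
There is a genuine gap, and it sits exactly at the step you yourself call the crux. Your pinning mechanism needs, for each edge $f_i$ at $b$, two edges $p_i,q_i$ at distance two from $f_i$ that are \emph{already forced} to carry two distinct colors, and the only forcing primitive you actually name is a triangle (``forced \ldots by a local triangle'', ``the same pair of triangle-edges'', and later ``arrive at a vertex or triangle whose incident edges would then require a fourth color''). But the gadget $B$ of Figure~\ref{figa} is triangle-free: its vertices are $u,b,v$ together with a ladder $x_1,x_2,x_3,y_1,y_2,y_3$, and its edges are $ub$, $bv$, $bx_1$, $by_1$, $x_1x_2$, $x_1y_2$, $y_1y_2$, $y_1x_2$, $x_2x_3$, $y_2y_3$ and $e=x_3y_3$. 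Since no two adjacent edges of $B$ lie in a common triangle, no pair of adjacent edges is directly constrained at all, and the triangle-based pinning cannot be instantiated anywhere in $B$. Your fallback appeals do not supply a substitute: symmetry cannot, because the constraint system is invariant both under permuting the three colors and under the left--right swap of the ladder, so a symmetry argument is equally consistent with the bad configuration $c(e)=c(x_2x_3)$ that must be excluded; and the global fact that $\chi_i'(B)=3$ says all three colors appear somewhere, not that any particular pair of edges at distance two from a given $f_i$ receives distinct colors.

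Concretely, the one fact everything hinges on is: in every 3 i-edge coloring of $B$, the two edges $x_2x_3$ and $y_2y_3$ adjacent to $e$ receive colors distinct from each other \emph{and from} $c(e)$. Distinctness from each other is a local disequality ($e$ is the middle edge of the path $x_2x_3,\,x_3y_3,\,y_3y_2$), but distinctness from $c(e)$ is not, and this is precisely where the paper's proof of Lemma~\ref{complexity} does its real work: assuming $e$ shares its color $c_1$ with an adjacent edge, every edge of the six-edge subgraph $B'$ of Figure~\ref{figb} lies within distance two of one of them, hence must avoid $c_1$, and one checks directly that $B'$ admits no 2 i-edge coloring --- a contradiction. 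With that anchor, the propagation is exactly your endgame: $x_1x_2$ and $y_1x_2$ are each at distance two from $e$ and from $y_2y_3$, forcing them to $c_2:=c(x_2x_3)$; symmetrically $x_1y_2$ and $y_1y_2$ are forced to $c_3:=c(y_2y_3)$; and each edge at $b$ sees both a $c_2$-edge and a $c_3$-edge at distance two (e.g.\ $ub$ sees $x_1x_2$ and $x_1y_2$ through the middle edge $bx_1$), pinning it to $c_1$. Without an argument equivalent to the $B'$ infeasibility check, your disequalities only narrow each $f_i$ to two candidate colors, and the lemma does not follow; so the proposal as written fails at its key step, for the concrete reason that $B$ contains no triangles.
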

\begin{proof}
Suppose edge $e$ and one of the edges adjacent to $e$ receive the same color, say $c_1$. Then the subgraph $B'$
of $B$, represented in Figure \ref{figb}, should be i-colored with 2 colors, as none of these edges can receive color $c_1$.
But this is clearly unfeasible. Thus, the two edges adjacent to edge $e$ should be colored with two different
colors, say $c_2\not=c_3$, both distinct from $c_1$. Clearly, the two other edges adjacent to the edge that received
color $c_2$ should also be colored with $c_2$, and the two other edges adjacent to the edge that received
color $c_3$ should also be colored with $c_3$. None of the edges incident with vertex $b$ can receive color $c_2$ nor
$c_3$, and consequently $c_1$  should be used to color all these edges.
\end{proof}

Figure \ref{figab} shows the unique (up to permutations of colors) feasible 3 i-egde coloring of graph $B$.
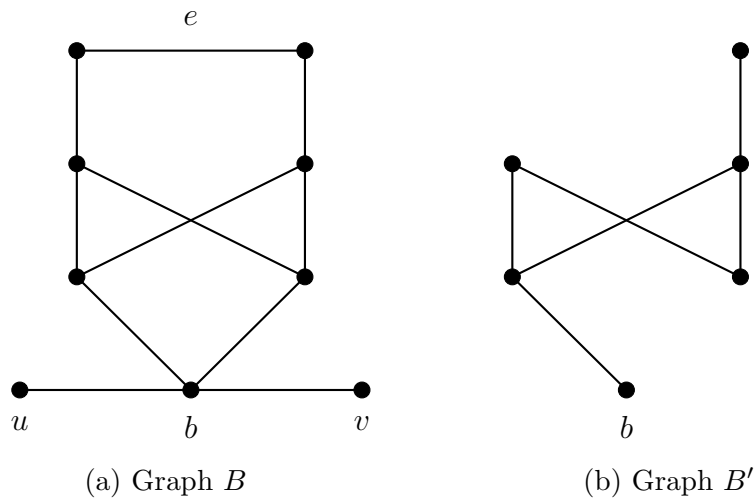
\begin{figure}[H]
   \centering	
   \begin{subfigure}[b]{0.3\textwidth}
   \begin{tikzpicture}[scale=1.5]
   \draw [thick,color= black, step=.5cm,] (-1.5,0) -- (0,0);
   \draw [thick,color= black, step=.5cm,] (0,0)--(1.5,0);
   \draw [thick,color= black, step=.5cm,] (0,0)--(-1,1);
   \draw [thick,color= black, step=.5cm,] (0,0)--(1,1) ;
   \draw [thick,color= black, step=.5cm,] (-1,1)--(-1,2);
   \draw [thick,color= black, step=.5cm,] (-1,1)--(1,2);
   \draw [thick,color= black, step=.5cm,] (1,1)--(1,2);
   \draw [thick,color= black, step=.5cm,] (1,1)--(-1,2);
   \draw [thick,color= black, step=.5cm,] (-1,2)--(-1,3);
   \draw [thick,color= black, step=.5cm,] (1,2)--(1,3);
   \draw [thick,color= black, step=.5cm,] (-1,3)--(1,3);
   \draw (-1.5,0) node[below,yshift=-2mm] {$u$} [fill] circle  [radius=0.07] ;
   \draw (0,0) node[below,yshift=-2mm] {$b$} [fill] circle  [radius=0.07];
   \draw (1.5,0) node[below,yshift=-2mm] {$v$} [fill] circle  [radius=0.07];
   \draw (0,3) node[above,yshift=+2mm] {$e$};
   \foreach \x in {1,2,3}{
       \draw (-1,\x) [fill] circle  [radius=0.07];
       \draw (1,\x) [fill] circle  [radius=0.07];
   }
   \end{tikzpicture}
   \caption{Graph $B$ }\label{figa}
   \end{subfigure}
   \hspace{2cm}
   \begin{subfigure}[b]{0.3\textwidth}
   \begin{tikzpicture}[scale=1.5]
   \draw [thick,color= black, step=.5cm,] (0,0)--(-1,1)  {};
   \draw [thick,color= black, step=.5cm,](-1,1)--(-1,2);
   \draw [thick,color= black, step=.5cm,](-1,1)--(1,2);
   \draw [thick,color= black, step=.5cm,](1,1)--(1,2);
   \draw [thick,color= black, step=.5cm,](1,1)--(-1,2);
   \draw [thick,color= black, step=.5cm,](1,2)--(1,3);
   \draw (0,0) node[below,yshift=-2mm] {$b$} [fill] circle  [radius=0.07];
   \foreach \x in {1,2}{
      \draw (-1,\x) [fill] circle  [radius=0.07];
      \draw (1,\x) [fill] circle  [radius=0.07];
   }
   \draw (1,3) [fill] circle  [radius=0.07];
   \end{tikzpicture}
   \caption{Graph $B'$}  \label{figb}

   \end{subfigure}
      \caption{Graphs $B$ and $B'$.}

\end{figure}

\begin{figure}[H]
\centering
\begin{tikzpicture}[scale=1.5]
\draw [thick,color= red, step=.5cm,] (-1.5,0) -- (0,0) node[below,midway] {$c_1$};
\draw [thick,color= red, step=.5cm,] (0,0)--(+1.5,0)  node[below,midway] {$c_1$};
\draw [thick,color= red, step=.5cm,] (0,0)--(-1,1)  node[] at (-.67,.5) {$c_1$};
\draw [thick,color= red, step=.5cm,](0,0)--(1,1)  node[] at (.7,.5){$c_1$};
\draw [thick,color= green, step=.5cm,](-1,1)--(-1,2) node[]at (-1.15,1.5)  {$c_2$};
\draw [thick,color= blue, step=.5cm,](-1,1)--(1,2) node[] at (.25, 1.8) {$c_{3}$};
\draw [thick,color= blue, step=.5cm,](1,1)--(1,2)node[] at (1.15,1.5) {$c_3$};
\draw [thick,color= green, step=.5cm,](1,1)--(-1,2)node[] at (-.47, 1.8) {$\quad c_2$};
\draw [thick,color= green, step=.5cm,](-1,2)--(-1,3) node[]  at (-1.15, 2.5){$c_2$};
\draw [thick,color= blue, step=.5cm,](1,2)--(1,3)node[] at (1.15,2.5) {$c_3$};
\draw [thick,color= red, step=.5cm,](-1,3)--(1,3)  node[below,midway] {$c_1$};
\draw (-1.5,0) node[] at (-1.5, -.25) {$u$} [fill] circle  [radius=0.07] ;
\draw (0,0) node[] at (0,-.25)  {$b$} [fill] circle  [radius=0.07];
\draw (1.5,0) node[] at (1.5,-.25) {$v$} [fill] circle  [radius=0.07];
\draw (0,3) node[above] {$e$};
\foreach \x in {1,2,3}{
\draw (-1,\x) [fill] circle  [radius=0.07];
\draw (1,\x) [fill] circle  [radius=0.07];
}
\end{tikzpicture}
\caption{The unique (up to permutations of colors) feasible 3 i-egde coloring of graph $B$. } \label{figab}
\end{figure}
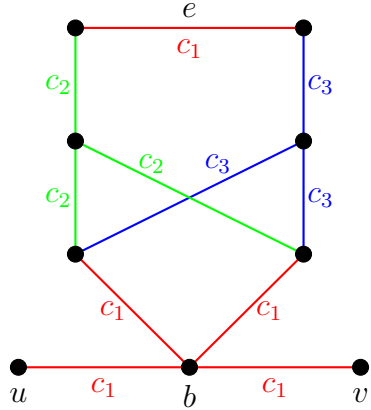

We now use Lemma \ref{complexity} and the NP-hardness of determining the edge chromatic number $\chi'(G)$ for
graphs $G$ with maximum degree $\Delta(G)=3$ (proved in \cite{Diamantis}), to establish the computational
complexity of i-egde coloring.

\begin{theorem}
It is NP-complete to recognize graphs having edge injective chromatic number equal to positive integer $k$.
\end{theorem}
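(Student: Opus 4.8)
The plan is to fix the target value $k=3$, establish membership in NP, and then give a polynomial-time reduction from the problem of deciding, for a graph $G$ with $\Delta(G)=3$, whether $\chi'(G)=3$, which is NP-complete by \cite{Diamantis}. Membership in NP is immediate: an injective edge coloring with $3$ colors is a polynomial-size certificate whose validity is checked by inspecting all triples of $3$-consecutive edges. Moreover, the graph $H$ I construct always contains a triangle, so Proposition~\ref{complete_graphs} (applied to $K_3$, equivalently Proposition~\ref{particular_cases}-\ref{case_2}) gives $\chi_{i}^{'}(H)\ge 3$; hence for this $H$ the equality $\chi_{i}^{'}(H)=3$ holds if and only if $\chi_{i}^{'}(H)\le 3$, and the latter is exactly what the certificate verifies.

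The reduction converts the per-vertex properness of an edge coloring of $G$ into the distance-two constraint of an injective edge coloring. First I would replace each vertex $x$ of $G$ by a triangle $T_x$, and to each edge $xy$ of $G$ I would assign a distinguished edge $\epsilon_{xy}$ of $T_x$. Since the three edges of a triangle are pairwise the outer edges of a $3$-consecutive triple, every injective edge coloring colors each $T_x$ with three distinct colors; this encodes precisely the requirement that the edges of $G$ incident with $x$ receive pairwise distinct colors. Next, for each edge $xy$ of $G$ I would insert a copy $B_{xy}$ of the gadget $B$ of Figure~\ref{figa}, glued so that $\epsilon_{xy}$ plays the role of the edge $e$ and $\epsilon_{yx}$ plays the role of one of the edges incident with $b$. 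By Lemma~\ref{complexity}, in any $3$-coloring all edges incident with $b$ together with $e$ share one common color, so the gadget forces $c(\epsilon_{xy})=c(\epsilon_{yx})$: it acts as an equality (color-transmission) device that makes the two triangle-edges representing $xy$ agree. The construction is clearly polynomial in the size of $G$.

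With $H$ in hand I would prove $\chi_{i}^{'}(H)=3 \iff \chi'(G)=3$. For the forward direction, a $3$-coloring of $H$ restricts on the edges $\epsilon_{xy}$ to a well-defined color per edge of $G$ (by the equality enforced across each $B_{xy}$) that is proper at every vertex (by the rainbow triangles), hence a proper $3$-edge-coloring of $G$; as $G$ is cubic this yields $\chi'(G)=3$. Conversely, given a proper $3$-edge-coloring of $G$, I would color each $\epsilon_{xy}$ with the color of $xy$, complete each triangle as a rainbow (consistent because the coloring is proper), and fill in each $B_{xy}$ using the unique feasible coloring of Figure~\ref{figab} (available precisely because its forced common color is the color of $xy$), obtaining an injective $3$-edge-coloring of $H$; together with $\chi_{i}^{'}(H)\ge 3$ this gives $\chi_{i}^{'}(H)=3$.

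The main obstacle I anticipate is the gluing bookkeeping: I must verify that the gadgets impose only the intended equalities, i.e. that no $3$-consecutive triple straddling a triangle–gadget junction (or two gadgets meeting at a common triangle vertex) creates a spurious inequality or forces an unexpected monochromatic set, and that the interiors of all copies of $B$ can be colored simultaneously without conflict. This is a careful but routine local analysis around each triangle and each copy of $B$, controlled throughout by Lemma~\ref{complexity}, which pins down exactly which edges are forced equal. Once this verification is in place, the equivalence above establishes NP-hardness, and with membership in NP the theorem follows.
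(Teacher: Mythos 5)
There is a genuine gap, and it sits precisely at the junctions you set aside as ``careful but routine local analysis'': as described, your graph $H$ is \emph{never} $3$-injectively edge colorable, regardless of $G$, so the claimed equivalence $\chi_{i}^{'}(H)=3 \iff \chi^{'}(G)=3$ collapses. Consider first the gluing in which $\epsilon_{yx}$, an edge of the rainbow triangle $T_y$, ``plays the role of one of the edges incident with $b$'' in $B_{xy}$: then $b$ is itself a vertex of $T_y$, so the \emph{second} triangle edge of $T_y$ at $b$ is also an edge incident with $b$ in $H$. In any $3$-coloring of $H$ the restriction to the copy of $B$ uses exactly three colors (since $B$ needs at least three), so the forcing argument of Lemma~\ref{complexity} applies, and it applies to \emph{every} edge of $H$ incident with $b$, not just those inside $B$: each such edge is the first edge of consecutive triples passing through the two upper neighbours of $b$, whose outgoing edges carry $c_2$ and $c_3$ in the forced pattern of Figure~\ref{figab}, so every edge at $b$ must receive $c_1$. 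Hence two edges of $T_y$ share the color $c_1$, contradicting the rainbow property your triangles are meant to enforce. The other junction type fails as well: if $\epsilon_{xy}$ is identified with the top edge $e$, with endpoints $t_1,t_2$, then the third vertex $w$ of $T_x$ yields an edge $wt_1$ that must avoid $c_1$ (rainbow with $e$), avoid $c_2$ (the triple $wt_1$, the vertical edge at $t_1$, and the next left-chain edge, both colored $c_2$ in the unique pattern of Figure~\ref{figab}), and avoid $c_3$ (the triple $wt_1$, $e$, and the vertical edge at $t_2$, colored $c_3$). With three colors this is impossible, so your reduction outputs constant NO instances and cannot be correct.

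The paper's construction avoids this trap by never introducing triangles: each edge $uv$ of $G$ is replaced by a copy of $B$ so that $uv$ becomes the two edges $ub$ and $bv$ of the gadget. Lemma~\ref{complexity} then makes the whole star at $b$, together with $e$, carry a single color (the color of the original edge $uv$), and properness at a shared vertex $u$ of $G$ is enforced automatically, with no equality device and no rainbow gadget: for two edges $uv, uw$ of $G$, the path $p\,b_{uv}$, $b_{uv}u$, $u\,b_{uw}$ (with $p$ an interior neighbour of $b_{uv}$) is a consecutive triple forcing the two gadgets' colors to differ. This also gives $\chi^{'}(G)=\chi_{i}^{'}(G(B))$ for every $k\geq 3$ at once, whereas you fixed $k=3$ --- a secondary scope issue next to the infeasibility above. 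Your high-level idea (NP membership, reduction from $\chi^{'}$ on maximum-degree-$3$ graphs via \cite{Diamantis}, using $B$ and Lemma~\ref{complexity}) matches the paper, but the vertex-to-triangle encoding plus edge-identification interface is exactly the part that would have to be redesigned; the paper's subdivision-style interface is the clean fix.
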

\begin{proof}
Given an arbitrary graph $G$ with maximum degree $\Delta(G)=3$, we construct graph $G(B)$ replacing every
edge $uv$ of $G$ by graph $B$ of Figure \ref{figa}, such that each edge $uv$ of $G$ is now the edges $ub$
and $bv$ of $B$. Therefore, in the modified graph $G(B)$ we have a graph $B$ of Figure \ref{figa} for each
edge of $G$. Given a $k\geq 3$ edge coloring of $G$ we obtain a $k$ i-egde coloring of $G(B)$ by (i) assigning
the color used on each edge $uv$ of $G$ to edge $e$ and to every edge incident with vertex $b$
of subgraph $B$ of $G(B)$ corresponding to edge $uv$; and (ii) using any two of the remaining $k-1$ colors to color the
other edges of $G(B)$ as in Figure \ref{figab}. Clearly, the resulting $k$ i-egde coloring of $G(B)$ is feasible if and only if
the $k$ edge coloring of $G$ is feasible.

Conversely, given a feasible $k\geq 3$ i-egde coloring of $G(B)$, where each subgraph $B$ corresponding to each edge of $G$ has exactly 3
colors, we obtain a feasible $k$ edge coloring of $G$ assigning to every edge $uv$ of $G$ the color used on edge $e$ and on every edge
incident with $b$ of the subgraph $B$ of $G(B)$ corresponding to $uv$.

We thus have $\chi^{'}(G)= \chi_{i}^{'}(G(B))$. Finally, since recognizing graphs $G$ with edge chromatic number, $\chi^{'}(G)$, equal
$k$ is obviously in NP, the result follows.
\end{proof}

\section{Conclusions and open problems}
In this paper we have characterized graphs having injective chromatic index equal to one
(Proposition~\ref{prop_2}) and two (Proposition~\ref{charact_graphs}), and graphs
with injective chromatic index equal to their sizes (Proposition~\ref{complete_graphs}).
These graphs are recognized in polynomial time.
We showed that trees have injective chromatic index equal to 1, 2 or 3
(Proposition~\ref{charact_trees}), and identified the trees $T$ with
$\chi^{'}_{i}(T)=i$, for $i=1,2,3$ (Proposition~\ref{charact_trees}).

In Section~\ref{sec_3}, we have introduced the notion of $\omega^{'}EIC$-graphs (for which $\chi_i^{'}(G)=\omega^{'}(G)$) and presented a few examples of these graphs.
However, the characterizations of $\omega^{'}EIC$-graphs remains open.

Some lower and upper bounds on the injective chromatic index of a graph were obtained in Section~\ref{sec_4}.

Regarding mesh graphs, in Section~\ref{sec_5}, the injective chromatic index of the cartesian products $P_{n} \Box K_{2}$
and $ P_{r} \Box P_{s}$ as well as the honey comb graphs were determined. However, it is not known the injective chromatic
index of several other mesh graphs as it is the case of hexagonal mesh graphs (see \cite[Fig. 2]{IVS}). It is also open to compute the injective chromatic index for planar graphs.

Finally, in Section~\ref{sec_6}, we have proved that determining the injective chromatic index of graphs is NP-hard.

\section*{Acknowledgements}
This research is partially supported by the Portuguese Foundation for
Science and Technology (\textquotedblleft FCT-Funda\c c\~ao para a Ci\^encia e a Tecnologia\textquotedblright),
through the CIDMA - Center for Research and Development in Mathematics and Applications, and
within project UID/MAT/04106/2013, for the first, third and fourth authors, and
through CMA - Center of Mathematics and Applications, Project UID/MAT/00297/2013, for the second author.
The fourth author is supported by the project Cloud Thinking (CENTRO-07-ST24-FEDER-002031),
co-funded by QREN through \textquotedblleft Programa Mais Centro\textquotedblright.

\end{document}